\newcommand*{\rom}[1]{\textnormal{(\expandafter\romannumeral #1)}}
\newcommand{\enum}{\textnormal{\roman*)}}
\theoremstyle{plain}
\newtheorem{thm}{Theorem}
\newtheorem{prop}[thm]{Proposition}
\newtheorem{lem}[thm]{Lemma}
\newtheorem{cor}[thm]{Corollary}
\newtheorem*{thm*}{Theorem}
\theoremstyle{definition}
\newtheorem{defn}[thm]{Definition}
\renewenvironment{proof}{{\noindent \bfseries Proof }}{\qed}
\title{\textbf{Totally Disconnected (non-metric) Gelfand Duality}}
\author{
  Sebastián Rodríguez \\
  Universidad de los Andes \\
  \texttt{js.rodriguezl@uniandes.edu.co} 
  \and
  Xavier Caicedo \\
  Universidad de los Andes \\
  \texttt{xcaicedo@uniandes.edu.co}
}
\date{}
\begin{document}

\maketitle

 \noindent \textbf{Abstract: } We characterize those algebras over a disconnected uniformly complete topological field which are representable as algebras of continuous functions on compact topological spaces, generalizing thus Gelfand duality for non-archimedean normed fields (Van der Put's theorem \cite{VDP}). More generally, we establish for any topological field $F$ a (dual) adjunction between the category of compact $F$-Tychonoff spaces and a natural category of commutative $F$-algebras, which becomes a duality for normed fields satisfying a Stone-Weierstrass theorem. To obtain these results we do not utilize analytic tools, but the canonical group uniformity of the field and intrinsic properties of the algebras.

\section*{Introduction}

Gelfand duality is one the finest pieces of 20th century mathematics. Having as cornerstones the Gelfand-Kolmogorov and the Gelfand-Naimark theorems, which establish a correspondence between compact Hausdorff spaces and commutative $C^*$-algebras through the functors sending a space $X$ to the algebra of continuous functions $C(X,\mathbb{C})$, and a commutative $C^*$-algebra $A$ to its maximal spectrum $\textnormal{Max}(A)$, respectively.

This correspondence is not an exclusive phenomenon of the complex numbers; it holds for any topological field. As shown by Vechtomov (\cite{Vechtomov}, Proposition 8.2) the category of compact $F$-Tychonoff spaces—that is, spaces in which the continuous $F$-valued functions separate points and closed sets—is dual to the category of algebras of $F$-valued continuous functions over these spaces (cf. Theorem~\ref{thm : GD}). However, the characterization of the later algebras is known only in the real case (Banach algebras where $a^2+1$ is always invertible) or for non-archimedean valued fields (algebras with many idempotents).



Within this general framework, a natural question arises: \emph{how can we characterize the categories involved for a given topological field $F$?} On the topological side, compact $F$-Tychonoff are, in most cases, fairly simple to characterize (Theorem~\ref{thm:KH}), with their structure largely depending on the connectedness of $F$.



While the topological side of the duality is largely well understood, the category of algebras has been characterized only in the case of complete valued fields.  

This paper aims to provide a purely algebraic characterization of the $F$-algebras of continuous functions, without assuming that the field is normed. We find such a characterization when the field is complete for its canonical group uniformity and is either totally disconnected (cf. Theorem~\ref{thm:GND}) or satisfies the Stone-Weierstrass theorem (cf. Corollary~\ref{cor : ContAlgWS}). The first case extends Van der Put theorem \cite{VDP} by relying solely on the algebraic structure and the topology of the field, thus avoiding norms and deep analytical techniques, in contrast with the approaches in \cite{VDP} and \cite{Dominguez}.

To give a complete description of the algebras involved, we identify the key properties that characterize an $F$-algebra as an algebra of continuous functions on its maximal spectrum. An $F$-algebra $A$ is called a \emph{Gelfand algebra} if, for every maximal ideal $M\subset A$, the quotient $A/M = F$. This turns out that this is the single most important property. This property allows us to endow the maximal spectrum with a topology finer than the Zariski topology, which, specializes to the Gelfand topology used in the proof of the Gelfand-Naimark theorem \cite{Rudin}. Furthermore, we introduce a canonical uniform structure on the algebra, demonstrating that its analytic aspects emerge naturally from its algebraic properties.  



After establishing the key results on algebras of continuous functions and Gelfand algebras, we construct a (dual) adjunction between the category of compact $F$-Tychonoff spaces and a category of $F$-algebras that retain the essential structural features of algebras of continuous functions, thereby extending Gelfand duality. When the field $F$ satisfies the Stone–Weierstrass theorem, except in the case of the complex numbers, the adjunction becomes a duality.

Our approach relies only on canonical constructions derived from the algebra and the topology of the field, highlighting its intrinsic algebraic nature. The algebricity of Gelfand duality was first observed by Mulvey \cite{Mulvey}, but his generalization, as well as that of Vechtomov, omits certain structural properties of algebras of continuous functions that we explicitly preserve.

 In Section~\ref{sec : 1}, we establish the main facts about algebras of continuous functions and give explicit constructions of the functors and natural transformations involved in the duality. 

In Section~\ref{sec : 2} we define the Gelfand topology on $\textnormal{Max}(A)$ and show that, for algebras of the form $C(X,F)$ where $X$ is a compact $F$-Tychonoff space, it coincides with the Zariski topology. Additionally, we introduce the canonical uniformity over a Gelfand algebra, and show that, in the case of $C(X,F)$, the uniform topology is the compact-open topology. We show that the algebraic and the uniform structures interact nicely, generalizing well-known results about $C^*$-algebras and $\mathbb{C}$-algebras to the setting of Gelfand algebras.

 In Section~\ref{sec : 4}, we generalize Van der Put theorem to arbitrary disconnected topological fields. Considering disconnected fields complete with respect to their canonical group uniformity and, using the tools developed in Section~\ref{sec : 2}, we characterize the category of algebras of continuous $F$-valued functions over Stone spaces. 

 In Section~\ref{sec : 3}, we provide a detailed proof of Gelfand duality. Furthermore, we establish a (dual) adjunction between the category of compact $F$-Tychonoff spaces and a category of algebras sharing some of the fundamental properties with algebras of continuous functions. This adjunction extends Gelfand duality and it becomes a duality when the Stone-Weierstrass theorem holds. 

\section{Preliminaries}

\label{sec : 1}

\noindent The aim of the this section is to establish the main ingredients involved in the construction of the duality, namely, the continuous functions functor $\textsf{C}_F$, the maximal spectrum functor $\textsf{M}_F$ and the Gelfand transform $G_F$, except for the Gelfand map $I_F$ which will be defined in Section~\ref{sec : 2}. 

By a topological field, we always mean a proper topological field; that is, one whose topology is not the indiscrete topology. For the reader interested in a more in-depth study of topological fields, we recommend consulting \cite{MR1075419, Warner,Witold}.

\subsection{Generalized Gelfand-Kolmogorov transform}

Let $X$ be a topological space and $F$ a (proper) topological field. We denote by $C(X,F)$ the $F$-algebra of continuous $F$-valued functions with the operations calculated pointwise. 

The maximal spectrum of a commutative unitary ring $R$ is the set $\textnormal{Max}(R) = \{M\subset R: M \textnormal{ maximal ideal}\}$ endowed with the Zariski topology, $\uptau_Z$. Its closed sets have the form $V_I = \{M\in\textnormal{Max}(R): I\subset M\}$, where $I\subset R$ is an ideal, and the canonical open basis is given by the sets $D(a) = \{M\in \textnormal{Max}(R): a\not\in M\}$ with $a\in R$. Notice that for any $a,b\in R$ $D(ab) = D(a)\cap D(b)$ and $D(1)=\textnormal{Max}(A)$ and $D(0) = \emptyset$. The space $\textnormal{Max}(A)$ is always compact and $T_1$ and a subspace of the prime spectrum $\textnormal{Spec}(R)$, the set of prime ideals of $R$ endowed with a similar topology. 

For any topological space $X$ and any topological field $F$ define the \emph{Gelfand transform} as the map

\[
\begin{matrix}
G_F: & X & \to & \textnormal{Max}(C(X,F)) \\
     & x & \mapsto & M_x = \textnormal{ker}(ev_x) 
\end{matrix}
\]
where $ev_x(f) = f(x)$.

 The Gelfand transform is always continuous (\cite{Xavier}, Lemma 2.1) and it is surjective if $X$ is compact Hausdorff (\cite{Xavier}, Theorem 3.2). Furthermore, it is easy to verify that $G_F$ is injective iff $C(X,F)$ separates
points. But in order to make $G_F$  a homeomorphism we require the space $X$ to satisfy a stronger separation axiom. 

\begin{defn}
    Let $F$ be a topological field. A space $X$ is \emph{$F$-Tychonoff} if $X$ is $T_1$  and for all $C\subset X$ closed and $x\in X\setminus C$ there exists $f\in C(X,F)$ such that $f(x)=1$ and $f(C)=0$.
\end{defn}

\noindent Equivalently, a space $X$ is $F$-Tychonoff if for all $x\in X$ and $U$ open with $x\in U$ there exists $f\in C(X,F)$ such that $f(x) = 1$ and $\textnormal{supp}(f)\subset U$, where $\textnormal{supp}(f)$ denotes the closure of the support of the function. By definition $C(X,F)$ separates points so in this case, and since any (proper) topological field is  Hausdorff \cite{Witold}, we have that any $F$-Tychonoff space is Hausdorff.

\begin{lem}
    Let $X$ be $F$-Tychonoff, then $G_F(X)$ is Hausdorff in the inherited topology from \textnormal{Max}$(C(X,F))$.
\end{lem}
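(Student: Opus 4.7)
The plan is to exhibit, for any pair of distinct points $M_x, M_y \in G_F(X)$, two disjoint basic Zariski opens $D(f), D(g) \subset \textnormal{Max}(C(X,F))$ with $M_x \in D(f)$ and $M_y \in D(g)$. Since the excerpt already notes that an $F$-Tychonoff space is Hausdorff (because $F$ itself is Hausdorff and $C(X,F)$ separates points), we may start with disjoint open neighborhoods $U \ni x$ and $V \ni y$ in $X$.

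First I would invoke the equivalent formulation of the $F$-Tychonoff property stated right after the definition: applied to $x \in U$, it produces $f \in C(X,F)$ with $f(x)=1$ and $\textnormal{supp}(f) \subset U$; applied to $y \in V$, it produces $g \in C(X,F)$ with $g(y)=1$ and $\textnormal{supp}(g) \subset V$. The key consequence is that $\textnormal{supp}(f) \cap \textnormal{supp}(g) \subset U \cap V = \emptyset$, so for every $z \in X$ either $f(z)=0$ or $g(z)=0$, which means $(fg)(z)=0$ for all $z$, i.e.\ $fg = 0$ in the ring $C(X,F)$.

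Now I would read off the Hausdorff separation from the algebra of the $D(\cdot)$ sets. Since $f(x)=1 \neq 0$ we have $f \notin M_x = \ker(ev_x)$, so $M_x \in D(f)$; symmetrically $M_y \in D(g)$. Using the identity $D(a)\cap D(b) = D(ab)$ recalled in the preliminaries, together with $fg = 0$ and $D(0) = \emptyset$, we get
\[
 D(f) \cap D(g) \;=\; D(fg) \;=\; D(0) \;=\; \emptyset.
\]
Thus $D(f)$ and $D(g)$ are disjoint Zariski-open neighborhoods of $M_x$ and $M_y$, and intersecting with $G_F(X)$ gives the required separation in the subspace topology.

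The only non-routine point is the production of two functions whose \emph{product} vanishes identically, since Zariski separation requires an \emph{algebraic} incompatibility rather than just the existence of a single separating function. This is handled cleanly by the support version of $F$-Tychonoff, which turns topological disjointness of $U$ and $V$ into the algebraic relation $fg=0$; once that is in hand the rest is a direct application of the standard identities for the basic opens $D(a)$.
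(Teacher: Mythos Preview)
Your proof is correct and follows essentially the same approach as the paper: both use Hausdorffness of $X$ to pick disjoint opens $U,V$, apply the support formulation of $F$-Tychonoff to obtain $f,g$ with $f(x)=g(y)=1$ and disjoint supports, and then read off $D(f)\cap D(g)=D(fg)=D(0)=\emptyset$. The paper's write-up is slightly terser, but the argument is identical.
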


\begin{proof}
    Given two points $x,y\in X$ it is enough to find two functions $f,g\in C(X,F)$ such that $f(x),g(y) \neq 0$ and $fg=0$, equivalently, $M_x\in D(f)$, $M_y\in D(g)$ and $D(f)\cap D(g) = D(fg) = \emptyset$. Let $U,V\subset X$ be disjoint open sets such that $x\in U$ and $y\in V$, since $X$ is $F$-Tychonoff we can find $f,g\in C(X,F)$ such that $f(x)=g(y)=1$, $\textnormal{supp}(f)\subset U$ and $\textnormal{supp}(g)\subset V$. These are the desired functions. 
\end{proof}

\vspace{2mm}

If the $F$-Tychonoff space $X$ is compact then $G_F$ is a continuous bijection, since $C(X,F)$ separate points. Hence, by the previous Lemma it is a homeomorphism, which gives us one of the directions of the following theorem.  

\begin{thm}[Vechtomov, \cite{Vechtomov}]
    The Gelfand transform $G_F$ is a homeomorphism iff $X$ is compact and $F$-Tychonoff.
    \label{thm: TGK}
\end{thm}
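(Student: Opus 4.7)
The forward direction---that $G_F$ is a homeomorphism when $X$ is compact and $F$-Tychonoff---has already been established in the paragraph preceding the theorem statement, using the earlier lemma and the fact that $C(X,F)$ separates points. My plan is therefore to prove the converse: assuming $G_F$ is a homeomorphism, deduce that $X$ is compact and $F$-Tychonoff.

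First I would invoke the general facts, recalled in the preliminaries, that $\textnormal{Max}(C(X,F))$ is always compact and $T_1$. Since $G_F$ is a homeomorphism, $X$ inherits both properties; in particular $X$ is compact and $T_1$. Moreover, injectivity of $G_F$ (which is part of being a homeomorphism) means that $M_x \neq M_y$ whenever $x\neq y$, so $C(X,F)$ separates points.

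To establish the $F$-Tychonoff property, I would exploit the canonical basis $\{D(a)\}_{a\in C(X,F)}$ of the Zariski topology on $\textnormal{Max}(C(X,F))$. Since $G_F$ is a homeomorphism, the pulled-back sets
\[
G_F^{-1}(D(a)) = \{x\in X : a\notin M_x\} = \{x\in X : a(x)\neq 0\}
\]
form a basis for the topology of $X$. Given a closed set $C\subset X$ and a point $x\notin C$, apply this basis property to the open neighborhood $U = X\setminus C$ of $x$: there exists $a\in C(X,F)$ with $x\in\{y : a(y)\neq 0\}\subset U$, i.e.\ $a(x)\neq 0$ and $a$ vanishes identically on $C$. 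Setting $f = a(x)^{-1}\cdot a$ yields $f\in C(X,F)$ with $f(x)=1$ and $f(C)=0$, witnessing the $F$-Tychonoff condition.

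I do not anticipate a real obstacle here; the substance of the converse is simply the dictionary $D(a)\leftrightarrow \{a\neq 0\}$, which translates the abstract Zariski basis into exactly the ``cozero-set'' basis needed for separating points from closed sets. The only subtle step is the final normalization, which relies on $F$ being a field so that $a(x)^{-1}$ is available---without this, one would only obtain the weaker statement that $a$ is non-zero at $x$ and vanishes on $C$.
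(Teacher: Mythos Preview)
Your proof is correct and follows essentially the same approach as the paper. The only cosmetic difference is that the paper works on the closed-set side (writing $G_F(C)=V_J$ for some ideal $J$ and picking $f\in J\setminus M_y$), while you work on the open-basis side (pulling back $D(a)$ to cozero sets); these are dual formulations of the same dictionary, and your version even includes the normalization step $f=a(x)^{-1}a$ that the paper leaves implicit.
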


\begin{proof}
    Suppose that $G_F$ is a homeomorphism, then clearly $X$ is compact. Let $C\subset X$ closed and $y\in X\setminus C$, identifying through $G_F$ we have that $A=\{M_x: x\in C\}$ is a closed set that does not contain $M_y$. Find an ideal $J\subset C(X,F)$ such that $V_J=A$, since $M_y\not \in V_J$ then $J\not\subset M_y$ or, equivalently, there exists $f\in J$ such that $f(y)\neq 0$. But for all $x\in C$ we have $f\in M_x$ ($J\subset M_x$), which implies $f(x)=0$ for all $x\in C$. Summarizing, we have a continuous function $f$ that vanishes on $C$ but not at $y$.
\end{proof}


\vspace{2mm}

Let $\textsf{KH}$ be the category of compact Hausdorff spaces and continuous maps, and given a topological field $F$ let $\textsf{KH}_F$ denote the full subcategory of compact $F$-Tychonoff spaces. \textsf{Stone} will denote full subcategory of compact $0$-dimensional spaces. 


\begin{lem}
Let $F$ be a topological field then, $\textsf{KH}_F$ is determined by the connectivity of the field:
\begin{enumerate}[label=\enum]
    \item  $\textsf{KH}_F=\textsf{KH}$ iff $F$ is path-connected. 
    \item  $\textsf{KH}_F = \textsf{Stone}$ if $F$ is disconnected.
    \item  $\textsf{Stone}\subset\textsf{KH}_F \subset$ totally path-disconnected spaces in \textsf{KH}, if $F$ is connected but path-disconnected.
\end{enumerate}
\label{thm:KH}
\end{lem}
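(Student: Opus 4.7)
The plan is to isolate three elementary ingredients and apply them uniformly across the three cases. \emph{Ingredient A:} In any topological field $F$, the connected component $C_{0}$ of $0$ is an ideal, because it is closed under addition and for any $a\in F$ the homeomorphism $x\mapsto ax$ fixes $0$ and thus preserves $C_{0}$; hence $C_{0}\in\{\{0\},F\}$, which means a disconnected $F$ is automatically totally disconnected. \emph{Ingredient B:} For any Stone space $X$ and any clopen $U\subset X$, the characteristic function $\chi_{U}\colon X\to F$ (taking values in $\{0,1\}\subset F$) is continuous, since the preimage of any open set of $F$ is one of $\emptyset, X, U, X\setminus U$; hence $\textsf{Stone}\subset\textsf{KH}_{F}$ for every topological field $F$. \emph{Ingredient C:} If $p\colon[0,1]\to F$ satisfies $p(0)=0$, $p(1)=1$, then for any $a,b\in F$ the map $t\mapsto a+p(t)(b-a)$ is a path from $a$ to $b$; so path-connectedness of $F$ is witnessed by the mere existence of such a $p$.

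For \rom{1}, if $F$ is path-connected fix $p$ as in~C; given $X\in\textsf{KH}$ with $x\notin C$ closed, the classical Urysohn Lemma (applicable since compact Hausdorff spaces are normal) produces $u\colon X\to[0,1]$ with $u(x)=1$, $u(C)=0$, and $p\circ u$ witnesses the $F$-Tychonoff property, giving $\textsf{KH}\subset\textsf{KH}_{F}$. Conversely, if $\textsf{KH}_{F}=\textsf{KH}$ then $[0,1]$ is $F$-Tychonoff, so separating $x=1$ from $C=\{0\}$ produces a path $p$ as in~C, whence $F$ is path-connected.

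For \rom{2}, $\textsf{Stone}\subset\textsf{KH}_{F}$ is Ingredient~B. For the converse, let $X\in\textsf{KH}_{F}$ and $x\neq y$; pick $f\in C(X,F)$ with $f(x)=1$, $f(y)=0$. By Ingredient~A the image $f(X)\subset F$ is totally disconnected, so $0$ and $1$ lie in disjoint clopens of $f(X)$ whose $f$-preimages clopen-separate $x$ from $y$ in $X$. A compact Hausdorff totally separated space is zero-dimensional, hence $X\in\textsf{Stone}$.

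For \rom{3}, $\textsf{Stone}\subset\textsf{KH}_{F}$ is again Ingredient~B. For the upper bound, suppose two distinct points $x\neq y$ of some $X\in\textsf{KH}_{F}$ were joined by a path $\gamma\colon[0,1]\to X$; separating them by $f$ as above, $f\circ\gamma$ would be a path from $1$ to $0$ in $F$, contradicting Ingredient~C since $F$ is not path-connected. The only place where genuinely field-theoretic structure is used is Ingredient~A; the remaining steps just thread Urysohn-type functions through $F$ via its natural symmetries.
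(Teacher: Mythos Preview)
Your proof is correct and follows essentially the same skeleton as the paper's: the same reduction of ``disconnected $\Rightarrow$ totally disconnected'' via the component of $0$ being an ideal, the same use of characteristic functions of clopens to place $\textsf{Stone}$ inside $\textsf{KH}_F$, and the same path-composition trick for \rom{1} and \rom{3}. The only noteworthy differences are in presentation: for \rom{1} the paper simply cites Theorem~\ref{thm: TGK} together with an external reference, whereas you give the self-contained Urysohn argument (which is exactly what that reference unpacks to); for \rom{2} the paper shows $X$ is totally disconnected by observing that a continuous $f$ collapses any connected subset to a point, while you instead produce clopen separations by pulling back clopens from the totally disconnected image $f(X)$---both routes land on the standard fact that in compact Hausdorff spaces total disconnectedness, total separation, and zero-dimensionality coincide.
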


\begin{proof} 
First, note that if a field is not connected, then it is totally disconnected (see \cite{Witold}, Ch. 10, Th. 1). With a slight modification of this proof, one can show that if a field is not path-connected, then it is totally path-disconnected.
\begin{enumerate}[label=\enum]
    \item This follows from Theorem~\ref{thm: TGK} and (\cite{Xavier}, Theorem 2.4).
    \item Let $X$ be a $F$-Tychonoff then $X$ must be totally disconnected, otherwise, if $ C\subset X$ is a non-trivial connected subset and $x\neq y\in C$ then, by the observation at the beginning, for all $f\in C(X,F)$ we have $f(x)=f(y)$, contradicting the fact that $C(X,F)$ separates points. Reciprocally, let $X$ be a Stone space, $C\subset X$ closed and $x\in X\setminus C$, then $x\in U\subset X\setminus C$ with $U$ clopen, define $f: X\to F$ by $f(a)=1$ if $a\in U$ and $f(a)=0$ if $a\in X\setminus U$. Then $f\in C(X,F)$  separates $x$ and $C$. 
    \item  The first inclusion is shown in the second part of \rom{2}, the second one as in the first part of \rom{2}, utilizing that path-disconnected fields are totally path-disconnected. 
\end{enumerate}
\end{proof}

\vspace{2mm}

\noindent  For examples of connected and totally path-disconnected fields see  \cite{Dieudonne} or \cite{Shell}. We wonder if in the last case of Lemma~\ref{thm:KH} the category $\textsf{KH}_F$ is independent of the field as in cases \rom{1} and \rom{2}.

\subsection{\texorpdfstring{The functor \textsf{C}$_F$}{The functor CF}}

For each topological field $F$ the homeomorphism given by the Gelfand transform is part of an equivalence of categories. Denote by $\textsf{Alg}_F$ the category of unitary commutative $F$-algebras and $F$-algebra morphisms preserving the identity. Then the functor $\textsf{C}_F:\textsf{KH}_F\to\textsf{Alg}_F$ given by

\[
\begin{matrix}
    X & \mapsto & C(X,F) & & & (f:X\to Y) & \mapsto & 
    \begin{pmatrix}
        f^*: C(Y,F) & \to & C(X,F) \\
        \varphi & \mapsto & \varphi \circ f
    \end{pmatrix}
\end{matrix}
\]
is a full contravariant embedding from $\textsf{KH}_F$ to $\textsf{Alg}_F$.

Before proceeding with the proof, we must first establish some facts about algebras of continuous functions. Given a topological space $X$ and a topological field $F$, the algebra of continuous functions $C(X,F)$ is clearly commutative and unitary. However, there are additional properties that play a significant role in Gelfand duality and must be satisfied by $C(X,F)$ when $X$ is compact Hausdorff. 

Recall that a $F$-algebra $A$ is \emph{semisimple} if $\textnormal{Jrad}(A) = \bigcap_{M\in\textnormal{Max}(A)} M = \{0\}$. For $A$ unitary the field $F$ embeds canonically into $A$ by $\lambda \mapsto \lambda\cdot 1$. The \emph{spectrum} of an element $a\in A$ is the set $\sigma(a) = \{\lambda\in F : a-\lambda \textnormal{ is not invertible}\}$. It is easy to verify that $\lambda\in \sigma(a)$ iff there exists $M\in\textnormal{Max}(A)$ such that $a + M = \lambda + M$. Finally, a $F$-algebra $A$ is \emph{Gelfand} if $A/M = F$ for every $M\in\textnormal{Max}(A)$. 

By the Gelfand-Mazur theorem \cite{Witold} any Banach $\mathbb{C}$-algebra is a Gelfand algebra, and a Banach $\mathbb{R}$-algebra $A$ is Gelfand iff $a^2+1$ is invertible for all $a\in A$\footnote{If we remove the norm, the condition that $a^2+1$ is invertible for every element of the algebra is equivalent to the quotient by any maximal ideal being a purely transcendental extension of $\mathbb{R}$.}.

\begin{lem}
Let $X$ be a compact Hausdorff space and $F$ be a topological field. Then $C(X,F)$ satisfies the following properties:
\begin{enumerate}[label=\enum]
    \item is Gelfand.
    \item is semisimple.
    \item the spectrum of every element is compact and non empty.
\end{enumerate}

\noindent Furthermore, if $X$ is $F$-Tychonoff then

\begin{enumerate}[label = \enum]\addtocounter{enumi}{3}
    \item every prime ideal is contained in a unique maximal ideal.
\end{enumerate}

\label{lem : propCAlg}

\end{lem}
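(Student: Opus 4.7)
The plan is to handle (i)--(iii) in a single stroke by combining the surjectivity of the Gelfand transform on compact Hausdorff spaces (cited earlier as \cite{Xavier}, Theorem 3.2) with the trivial observation that the evaluation $\mathrm{ev}_x:C(X,F)\to F$ is always surjective, since $C(X,F)$ contains the constants. Surjectivity of $G_F$ says that every $M\in\textnormal{Max}(C(X,F))$ has the form $M_x=\ker(\mathrm{ev}_x)$ for some $x\in X$, and the first isomorphism theorem then gives $C(X,F)/M_x\cong F$, which is \rom{1}. For \rom{2}, the same description yields $\textnormal{Jrad}(C(X,F))=\bigcap_{x\in X}\ker(\mathrm{ev}_x)=\{0\}$. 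For \rom{3}, combining the description of maximal ideals with the fact that $\lambda\in\sigma(f)$ iff $f-\lambda\in M$ for some maximal $M$ gives the identification $\sigma(f)=f(X)$: a continuous image of the compact (nonempty) space $X$, hence compact and nonempty.

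For \rom{4}, I would argue directly, using the extra hypothesis that $X$ is $F$-Tychonoff. Any prime ideal $P$ is proper and hence contained in at least one maximal ideal. For uniqueness, suppose for contradiction that $P\subset M_x\cap M_y$ for distinct $x,y\in X$. Pick disjoint open neighborhoods $U\ni x$ and $V\ni y$, and use the $F$-Tychonoff property in the equivalent formulation stated after the definition to produce $f,g\in C(X,F)$ with $f(x)=g(y)=1$, $\textnormal{supp}(f)\subset U$ and $\textnormal{supp}(g)\subset V$. Disjointness of the supports forces $fg\equiv 0\in P$, so by primality $f\in P$ or $g\in P$. But $f\in P\subset M_x$ would give $f(x)=0$, contradicting $f(x)=1$; symmetrically for $g$ at $y$.

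The main obstacle is \rom{4}: parts \rom{1}--\rom{3} reduce to bookkeeping once surjectivity of $G_F$ is invoked, but \rom{4} is where the stronger separation hypothesis really earns its keep. The subtle point is that $F$-Tychonoff delivers functions whose \emph{supports} (not merely their zero sets) are disjoint; this is precisely what makes $fg$ identically zero rather than just small near $x$ and $y$, and thereby lets the algebraic notion of primality take over. Everything in the argument is purely algebraic and topological, with no analytic ingredients required.
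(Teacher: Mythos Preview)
Your proof is correct and matches the paper's argument almost verbatim. The only cosmetic difference is in \rom{4}: the paper routes the separation through the Hausdorffness of $(\textnormal{Max}(C(X,F)),\uptau_Z)$, choosing $f,g$ with $D(fg)=\emptyset$ and then deducing $fg=0$, whereas you stay in $X$ and use the support formulation of $F$-Tychonoff directly---but these are the same functions and the same contradiction.
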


\begin{proof}
   By (\cite{Xavier}, Theorem 3.2) the Gelfand transform is surjective. Then every maximal ideal $M$ is of the form $M=M_x=\{f\in C(X,F): f(x)=0\}$ for some $x\in X$. \rom{1} follows from the fact that $f-f(x)\in M_x$. Notice that $f\in M_x$ for all $x\in X$ iff $f(x)=0$ for all $x\in X$, from which \rom{2} follows. Finally, for $f\in C(X,F)$ we have $\frac{1}{f}$ exist iff $f(x)\neq 0$ for all $x\in X$. Then $\lambda\in \sigma(f)$ iff there exists $x\in X$ such that $f(x)=\lambda$, i.e. $\sigma(f)=f(X)$.

   If $X$ is $F$-Tychonoff then by Theorem~\ref{thm: TGK} $\textnormal{Max}(C(X,F))$ is Hausdorff. Suppose there exists $P\subset M,N$  with $P$ prime and $N,M$ maximal ideals, let $f,g \in C(X,F)$ such that $M\in D(f)$, $N\in D(g)$ and $D(fg) = \emptyset$. Then $M_x\nin D(fg)$ for all $x\in X$, so $fg = 0$. This implies $fg\in P$ and, since $P$ is prime, we have $f\in P$ or $g\in P$ which leads to a contradiction since $f\nin M$ and $g\nin N$. 
\end{proof}

\begin{prop}
    The functor $\textsf{C}_F$ is one to one in objects (modulo isomorphism) and, moreover, is full and faithful. 
    \label{cor: Cfull}
\end{prop}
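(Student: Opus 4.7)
The plan is to establish the three properties---injectivity on objects (up to isomorphism), faithfulness, and fullness---by exploiting the homeomorphism $G_F : X \to \textnormal{Max}(C(X,F))$ from Theorem~\ref{thm: TGK} together with the Gelfand property of $C(X,F)$ from Lemma~\ref{lem : propCAlg}. Throughout, the crucial observation is that for any unital $F$-algebra morphism $\varphi : C(Y,F) \to C(X,F)$ and any $M \in \textnormal{Max}(C(X,F))$, the composition $C(Y,F) \xrightarrow{\varphi} C(X,F) \to C(X,F)/M = F$ is a nonzero unital $F$-linear ring map to $F$, hence surjective with maximal kernel; so $\varphi^{-1}(M)$ is maximal in $C(Y,F)$.

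For \emph{injectivity on objects}, suppose $\varphi : C(X,F) \xrightarrow{\sim} C(Y,F)$ is an $F$-algebra isomorphism. Taking preimages of maximal ideals yields a bijection $\textnormal{Max}(C(Y,F)) \to \textnormal{Max}(C(X,F))$, which is a Zariski homeomorphism because $\varphi^{-1}(D(a)) = D(\varphi(a))$. Conjugating by the Gelfand transforms produces a homeomorphism $Y \cong X$. For \emph{faithfulness}, if $f, g : X \to Y$ satisfy $f^* = g^*$, then $\psi(f(x)) = \psi(g(x))$ for every $\psi \in C(Y,F)$ and every $x \in X$; since $Y$ is $F$-Tychonoff, $C(Y,F)$ separates points, forcing $f = g$.

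For \emph{fullness}, given an $F$-algebra morphism $\varphi : C(Y,F) \to C(X,F)$, the preceding observation shows that for every $x \in X$ the ideal $\varphi^{-1}(M_x) \subset C(Y,F)$ is maximal; by Theorem~\ref{thm: TGK} it equals $M_{f(x)}$ for a unique $f(x) \in Y$. This defines a map $f : X \to Y$ satisfying $G_F \circ f = \textnormal{Max}(\varphi) \circ G_F$, and continuity of $f$ follows from continuity of $\textnormal{Max}(\varphi)$ in the Zariski topology together with the fact that $G_F$ is a homeomorphism on both sides. Finally, for any $\psi \in C(Y,F)$ and $x \in X$, the relation $\psi - \psi(f(x)) \in M_{f(x)} = \varphi^{-1}(M_x)$ yields $\varphi(\psi)(x) = \psi(f(x)) = f^*(\psi)(x)$, so $\varphi = f^*$.

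The main obstacle is the fullness step, and within it the verification that $\varphi^{-1}(M_x)$ is maximal: without the Gelfand property of $C(X,F)$ one would only obtain primality of this preimage (the quotient would merely be an integral domain), and the identification with a point of $Y$ via $G_F$ would break down. The Gelfand property of algebras of continuous functions is therefore precisely what enables the passage from maximal-ideal arithmetic back to pointwise evaluation, and is the load-bearing ingredient of the whole argument.
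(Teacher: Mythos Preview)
Your proof is correct and follows essentially the same route as the paper: both use the Gelfand property to show that preimages of maximal ideals are maximal, pull the given algebra morphism back to a Zariski-continuous map on spectra, transport it through the Gelfand transforms to obtain the candidate continuous map, and then verify that precomposition recovers the original morphism. Your final verification (via $\psi-\psi(f(x))\in M_{f(x)}=\varphi^{-1}(M_x)$) is slightly more direct than the paper's ``equal iff both vanish'' trick, and your separate argument for injectivity on objects is redundant once fullness and faithfulness are established, but these are cosmetic differences rather than a different strategy.
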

 
\begin{proof}
Faithfulness is immediate because $f(x) \neq g(x)$ for $f,g:X\to Y$ implies there is a $\varphi: Y\to F$ such that $\varphi(f(x)) \neq \varphi(g(x))$. Let $X,Y\in\textsf{KH}_F$ and $\psi: C(X,F)\to C(Y,F)$ be a $F$-algebra morphism. It follows from the Gelfand property that any maximal ideal $M\subset C(Y,F)$ induces a morphism $\pi_M: C(Y,F)\to F$, it follows immediately that $\psi^{-1}(M) = \textnormal{ker}(\pi_M\circ\psi)$, thus the preimage of a maximal ideal is a maximal ideal. The map $\psi^{-1}:\textnormal{Max}(C(Y,F))\to \textnormal{Max}(C(X,F))$ is continuous since $\psi^{-1}:\textnormal{Spec}(C(Y,F))\to \textnormal{Spec}(C(X,F))$ is continuous. 

To show fullness, we show that the following diagram commutes, which implies $\psi = \textsf{C}_F(G_X^{-1} \circ \psi^{-1}\circ G_Y)$.

\begin{center}
    \begin{tikzpicture}
        \matrix (m) [matrix of math nodes, name = m, row sep = 1.5cm, column sep = 1cm]
        {
        C(X,F) & C(X,Y) \\
        C(\textnormal{Max}(C(X,F)),F) & C(\textnormal{Max}(C(Y,F)),F) \\
        };
        \path[-stealth]
        (m-1-1) edge node[above]{$\psi$} (m-1-2)
        (m-2-1) edge node[left]{$\textsf{C}_FG_X$} (m-1-1)
        (m-2-2) edge node[right]{$\textsf{C}_FG_Y$} (m-1-2)
        (m-2-1) edge node[below]{$\textsf{C}_F\psi^{-1}$} (m-2-2)
        ;
    \end{tikzpicture}
\end{center}

Take $\varphi = \textsf{C}_F(G_X^{-1} \circ \psi^{-1}\circ G_Y)$ and let $f\in C(X,F)$ and $y\in Y$. Notice that it suffices to show that $\psi(f)(y) = 0$ iff $\varphi(f)(y)=0$, since if $\psi(f)(y) = b$ then $0 = \psi(f)(y) - b = \psi(f-b)(y)$ and by hypothesis this happens iff $0 = \varphi(f-b)(y) = \varphi(f)(y) - b$. If $\psi(f)(y)$ we have $f\in \psi^{-1}(M_y)$, therefore, $ 0 = f(G_X^{-1}(\psi^{-1}(G_Y(y)))) = \varphi(f)(y)$. The other direction is analogous. 
\end{proof}

\vspace{2mm}

 The previous proposition already indicates that the functor $\textsf{C}_F$ "has an inverse" which we describe next. Nevertheless, we postpone this discussion until Section~\ref{sec : 3}.

\vspace{2mm}

\subsection{\texorpdfstring{The functor $\textsf{M}_F$}{The functor MF}}

Recall that in general, taking the maximal spectrum of a ring is not functorial, because given a ring homomorphism $\psi: R\to S$ the inverse image $\psi^{-1}(M)$ of a maximal ideal $M\subset S$ is prime but not necessarily maximal in $R$. Thus we do not have a canonical way to assign a maximal ideal from $R$ to a maximal ideal from $S$, even less to do it continuously. 

In Proposition~\ref{lem : propCAlg} we observed that for an algebra morphism $\psi : C(X,F)\to C(Y,F)$ we have $\psi^{-1}: \textnormal{Max}(C(Y,F))\to \textnormal{Max}(C(X,F))$ is a continuous function using the fact that maximal ideals can be identified with morphism to the field. Notice that this argument works the same for any Gelfand algebra. With this in mind we define the functor $\textsf{M}_F: \mathcal{G}\textsf{Alg}_F\to \textsf{Top}$ by

\[
\begin{matrix}
    A & \mapsto & \textnormal{Max}(A) & & & (\psi:A\to B) & \mapsto & \left(
    \begin{matrix}
        \textnormal{\textsf{M}}_F\psi: \textnormal{Max}(B) & \to & \textnormal{Max}(A) \\
       \hspace{5mm} M & \mapsto & \psi^{-1}(M)
    \end{matrix}
    \right)
\end{matrix}
\]
where $\mathcal{G}\textsf{Alg}_F$ is the category of Gelfand $F$-algebras and $\mathsf{Top}$ is the category of topological spaces. 

Denote the category generated by the image of $\textsf{C}_F$ by $\textsf{CAlg}_F$. By the Gelfand-Kolmogorov theorem if $A\in \textsf{CAlg}_F$ then $\textnormal{Max}(A)\in \textsf{KH}_F$ and by Lemma~\ref{lem : propCAlg} $\textsf{CAlg}_F\subset \mathcal{G}\textsf{Alg}_F$, consequently, we can restrict $\textsf{M}_F$ to $\textsf{CAlg}_F$ in order to get the functor $\textsf{M}_F : \textsf{CAlg}_F \to \textsf{KH}_F$.

\vspace{1mm}

\noindent \textbf{Remark: } There is an alternative way to define the maximal spectrum functor using a different fundamental property. A ring is a $pm$-ring if every prime ideal is contained in a unique maximal ideal. If $R$ is a $pm$-ring the map $\mu_R : \textnormal{Spec}(R)\to\textnormal{Max}(R)$ that assigns every prime ideal to the unique maximal containing it is a continuous retraction (\cite{OrsattiDeMarco}, Theorem 1.2). 

 Denote by $pm\textsf{Alg}_F$ the category of $F$-algebras that are $pm$-rings. We define the (contravariant) functor $\textsf{M}_F: pm\textsf{Alg}_F\to\textsf{KH}$ by
\[
\begin{matrix}
    A & \mapsto & \textnormal{Max}(A) & & & (\psi:A\to B) & \mapsto & \left(
    \begin{matrix}
        \textnormal{\textsf{M}}_F\psi: \textnormal{Max}(B) & \to & \textnormal{Max}(A) \\
       \hspace{5mm} M & \mapsto & \mu_A(\psi^{-1}(M))
    \end{matrix}
    \right)
\end{matrix}
\]

In fact, let $R$ be a $pm$-ring then by (\cite{DominguezGomez1981}, Prop 1) $(\textnormal{Max}(A),\uptau_Z)$ is Hausdorff. The continuity of $\textsf{M}_F\psi$ then follows from the continuity of $\textnormal{Max}(B)\overset{i}{\hookrightarrow}\textnormal{Spec}(B)\overset{\psi^{-1}}{\rightarrow}\textnormal{Spec}(A)$ and $\textnormal{Spec}(A)\overset{\mu_A}{\rightarrow}\textnormal{Max}(A)$. Given $A \overset{\psi}{\rightarrow} B \overset{\phi}{\rightarrow} C$, the equality $\textsf{M}_F(\phi\circ\psi) = \textsf{M}_F(\psi)\circ\textsf{M}_F(\phi)$ follows from the observation that for prime ideals $P\subset P^{\prime}$ implies $\mu(P) = \mu(P^{\prime})$ combined with $(\phi\circ\psi)^{-1}(P) \subset \psi^{-1}(\mu_B(\phi^{-1}(P)))$.

\vspace{2mm}

 By \rom{4} Lemma~\ref{lem : propCAlg} we know that $C(X,F)$ is a $pm$-ring if $X\in\textsf{KH}_F$, therefore, we can restrict $\textsf{M}_F$ to the category $\textsf{CAlg}_F$, and we obtain the maximal spectrum functor for algebras of continuous functions as before. 

\vspace{2mm}

\section{Gelfand Algebras and their Spectra}

\label{sec : 2}

The topological side of Gelfand duality is well understood (Theorem~\ref{thm: TGK}) and in most cases the category $\textsf{KH}_F$ is easily calculated (Lemma~\ref{thm:KH}). On the other hand, the category $\textsf{CAlg}_F$ has only been characterized, so far, for $\mathbb{R}$, $\mathbb{C}$, and complete fields with a non-archimedean absolute value. Where we have the famous Gelfand-Naimark theorems \cite{Rudin} and Van der Put theorem \cite{VDP}. 

This actually comprehends all absolute value (complete) cases. Since by Ostrowski theorem \cite{Witold} if the absolute value is archimedean and the field $F$ is complete then $F=\mathbb{R}$ or $F = \mathbb{C}$. 

Now, if we want to characterize the category $\textsf{CAlg}_F$ for an arbitrary topological field, we know some necessary conditions (Lemma~\ref{lem : propCAlg}), the Gelfand property being the most important one. Given a Gelfand algebra $A$ we can define the evaluation functions $ev_a: \textnormal{Max}(A)\to F$ by $ev_a(M) = a+M$. However, there is no reason to believe that these functions are Zariski continuous. 

Also, the proofs of the Gelfand-Naimark theorem and Van der Put theorem rely heavily on the metric structure of the algebras. This in contrast to Section~\ref{sec : 1}, where we make no mention of any metric/uniform structure. 

To summarize, there are two main questions to address:

\begin{enumerate}[label = \enum]
    \item \emph{When are the evaluation functions $\uptau_Z$-continuous?}
    \item \emph{What is the role of the metric structure?}
\end{enumerate}

For the first question, if we look at a classical proof \cite{Rudin}, the maximal spectrum is endowed, using the Gelfand property, with a different topology making the evaluation functions continuous. This topology is finer than the Zariski topology but, they coincide under rather natural suppositions.

For the second one, we will show that the metric/uniform structure arises canonically from the algebra (Definition~\ref{def : nunif}). On top of that, the uniform structure does not alter the algebraic information, more formally, see Theorem~\ref{thm : UnifCont}. 
 
\subsection{The Gelfand topology}
 
Let $A$ be a Gelfand algebra. The \emph{Gelfand topology} over Max($A$) is the weakest topology that makes the evaluations $ev_a$ continuous for all $a\in A$. We denote it by $\uptau_G$.

An equivalent way to describe the Gelfand topology is by seeing $\textnormal{Max}(A)\subset F^A$ as a subspace with the product topology under the identification $M\mapsto \pi_M$, where $\pi_M: A \to A/M = F$ is the canonical quotient map. This topology is finer than the Zariski topology ($\uptau_Z\leq \uptau_G$) since $D(a)=ev_a^{-1}(F^\times)$, and it is clearly Hausdorff.

\begin{lem}
    Let $A$ be a Gelfand $F$-algebra. Then $(\textnormal{Max}(A),\uptau_G)$ is closed in $F^A$ (product topology).
    \label{lem: MAX closed}
\end{lem}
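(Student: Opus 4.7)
My plan is to exploit the description of $\textnormal{Max}(A)$ as a set of homomorphisms. Under the identification $M \mapsto \pi_M$, a point $\varphi \in F^A$ lies in $\textnormal{Max}(A)$ if and only if $\varphi$ is a unitary $F$-algebra morphism $A \to F$: indeed, the Gelfand hypothesis gives a canonical isomorphism $A/M = F$ so each $\pi_M$ is such a morphism; conversely, if $\varphi: A \to F$ is unitary, $F$-linear, and multiplicative, then $\varphi(\lambda \cdot 1) = \lambda$ makes $\varphi$ surjective and hence $\ker\varphi$ maximal with $\pi_{\ker\varphi} = \varphi$. Thus it suffices to show that the set of unitary $F$-algebra morphisms is closed in $F^A$.

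The strategy is to write this set as an intersection of closed sets, one for each defining condition. The product topology makes each evaluation $\textnormal{ev}_a: F^A \to F$, $\varphi \mapsto \varphi(a)$, continuous. For fixed $a,b \in A$, the map $\varphi \mapsto \varphi(a+b) - \varphi(a) - \varphi(b)$ is continuous since the field operations of $F$ are continuous. Because $F$ is a proper topological field it is Hausdorff (\cite{Witold}), so $\{0\} \subset F$ is closed; hence the additivity set
\[
C^+_{a,b} = \{\varphi \in F^A : \varphi(a+b) = \varphi(a) + \varphi(b)\}
\]
is closed. The same argument gives closed sets $C^\times_{a,b} = \{\varphi : \varphi(ab) = \varphi(a)\varphi(b)\}$ for each $(a,b) \in A\times A$, closed sets $C^\cdot_{\lambda,a} = \{\varphi : \varphi(\lambda a) = \lambda \varphi(a)\}$ for each $(\lambda,a) \in F \times A$, and the closed set $C_1 = \{\varphi : \varphi(1) = 1\} = \textnormal{ev}_1^{-1}(\{1\})$.

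Intersecting over all these indices yields a closed subset of $F^A$ consisting of exactly the unitary $F$-algebra morphisms $A \to F$, which under the identification is $\textnormal{Max}(A)$. This gives the claim. I do not foresee a significant obstacle: the only point that requires care is checking that the Gelfand hypothesis is what makes the identification $M \leftrightarrow \pi_M$ into a bijection onto the set of unitary $F$-algebra morphisms (without Gelfand, quotients $A/M$ could be proper extensions of $F$ and the image would not be describable purely in terms of morphisms into $F$). Everything else is a routine closedness argument using continuity of field operations and Hausdorffness of $F$.
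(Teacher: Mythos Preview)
Your proof is correct and takes essentially the same approach as the paper: both identify $\textnormal{Max}(A)$ with the set of unitary $F$-algebra morphisms $A\to F$ and use continuity of the field operations together with Hausdorffness of $F$ to show this set is closed in $F^A$. The only difference is presentational---the paper checks closedness by showing a convergent net of morphisms has a morphism as its limit, while you write the set of morphisms explicitly as an intersection of closed preimages---but these are two standard phrasings of the same argument.
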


\begin{proof}
    Note that, identifying the elements of $\textnormal{Max}(A)$ with the corresponding morphism $f:A\to F$, $f\in \textnormal{Max}(A)\subset F^A$ iff $f(\alpha x+ \beta y) = \alpha f(x)+ \beta f(y)$, $f(xy)=f(x)f(y)$ and $f(1)=1$ for all $x,y \in A$ and all $\alpha,\beta \in F$. Let $(\pi_{M_\lambda})_\lambda\subset\textnormal{Max}(A)$ be a net convergent to $f\in F^A$. Then $\pi_{M_\lambda}(a)\to f(a)$ for all $a\in A$, and due to the continuity of sum and multiplication we have
    \[
    \pi_{M_\lambda}(\alpha x+\beta y) = \alpha\pi_{M_\lambda}(x) + \beta \pi_{M_\lambda}(y) \to \alpha f(x) + \beta f(y)
    \]
    from which follows $f(\alpha x+\beta y)=\alpha f(x)+\beta f(y)$. Analogously $f(xy)=f(x)f(y)$ and $f(1)=1$.
\end{proof}

\vspace{2mm}

\noindent As an observation, since $ev_a(\textnormal{Max}(A)) = \sigma(a)$, we have $\textnormal{Max}(A)\subset \prod_{a\in A}\sigma(a)$.

\begin{lem}
    Let $A$ be a Gelfand algebra. Then $\uptau_G = \uptau_Z$ iff for all $\uptau_G$-closed $C\subset \textnormal{Max}(A)$ and $M\not\in C$ there exists $a\in A$ such that $ev_a(C) = 0$ and $ev_a(M)=1$.
    \label{lem: Z=G}
\end{lem}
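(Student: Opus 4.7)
The plan is to prove the two directions separately, using that we already have $\uptau_Z \leq \uptau_G$, so the goal is to supply the reverse inequality under the separation hypothesis, while the forward direction extracts the desired separating element from the description of closed Zariski sets as $V_I$.

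For the forward direction, I would assume $\uptau_G = \uptau_Z$ and take a $\uptau_G$-closed set $C$ with $M \notin C$. Since $C$ is then also Zariski closed, write $C = V_I$ for some ideal $I \subset A$. The condition $M \notin V_I$ means $I \not\subset M$, so there exists $b \in I$ with $b \notin M$. Because $A$ is Gelfand, $A/M = F$, so $\pi_M(b) = ev_b(M)$ is a nonzero element $\lambda \in F$. Setting $a = \lambda^{-1}b$, we get $ev_a(M) = 1$, while $b \in I \subset N$ for every $N \in C = V_I$ forces $ev_a(N) = 0$, as required.

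For the reverse direction, I would show that every $\uptau_G$-closed set is $\uptau_Z$-closed (the other inclusion $\uptau_Z \leq \uptau_G$ being automatic). Let $C$ be $\uptau_G$-closed and pick $M \in \textnormal{Max}(A) \setminus C$. By hypothesis there exists $a \in A$ with $ev_a(C) = 0$ and $ev_a(M) = 1$; equivalently, $M \in D(a)$ and $D(a) \cap C = \emptyset$. Thus $D(a)$ is a $\uptau_Z$-open neighborhood of $M$ contained in the complement of $C$. Since $M$ was arbitrary, $\textnormal{Max}(A) \setminus C$ is Zariski open, so $C$ is Zariski closed.

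There is no real obstacle in this argument: the forward direction is just the translation between the two descriptions of Zariski closed sets via ideals and via vanishing of evaluations (using the Gelfand property to normalize), and the reverse direction is the standard observation that a separating family of continuous Zariski maps suffices to refine the separation. The only point that deserves care is the invocation of the Gelfand property $A/M = F$ to guarantee that $ev_b(M) \in F$ can be inverted; this is essential for the rescaling step $a = \lambda^{-1}b$.
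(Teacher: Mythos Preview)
Your proof is correct and follows essentially the same route as the paper. The only cosmetic differences are that in the forward direction you make the normalization $a=\lambda^{-1}b$ explicit (the paper stops at $ev_a(M)\neq 0$ and leaves the rescaling implicit), and in the reverse direction you argue that the complement of $C$ is Zariski open via basic opens $D(a)$, whereas the paper phrases the same step as showing $C=V_I$ with $I=\bigcap_{N\in C}N$; these are equivalent reformulations of the same idea.
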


\begin{proof}
    "$\Rightarrow$" Let $I\subset A$ be an ideal and $M\not\in V_I$ then there exists $a\in I\setminus M$, it follows $ev_a(V_I) = 0$ and $ev_a(M) \neq 0$. 
    
    "$\Leftarrow$" Let $C\subset \textnormal{Max}(A)$ $\uptau_G$-closed, we'll see that $C=V_I$ where $I= \bigcap_{M\in C}M$. Assume there exists $M\not\in C$ such that $I\subset M$, by hypothesis it exists $a\in A$ such that $ev_a(C) = 0$ and $ev_a(M) = 1$. This implies that $a\in I$ but this is a contradiction to the fact that $ev_a(M) = 1$. 
\end{proof}

\vspace{2mm}

Essentially, the previous Lemma tells us that the two topologies coincide exactly when the evaluation maps, $ev_a$, serve as witnesses for the Tychonoff property. Note that this makes sense, since, if the duality holds, we have $I_F(A) = C(\textnormal{Max}(A),F)$ and $(\textnormal{Max}(A),\uptau_Z)$ is $F$-Tychonoff. 

\begin{cor}
    Let $X$ be a compact $F$-Tychonoff space, then the Zariski and the Gelfand topology coincide for $\textnormal{Max}(C(X,F))$.
\end{cor}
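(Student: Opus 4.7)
My plan is to invoke the Gelfand--Kolmogorov theorem (Theorem~\ref{thm: TGK}) to identify $X$ with $(\textnormal{Max}(C(X,F)),\uptau_Z)$ via the homeomorphism $G_F$, and then reduce the statement to comparing two topologies on $X$ itself. Under this identification $\uptau_Z$ pulls back to the original topology on $X$, so the content of the corollary reduces to showing that $\uptau_G$ also pulls back to the original topology of $X$.

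The computation of the pullback is essentially a one-liner: since $a - a(x)\cdot 1 \in M_x$, one has $ev_a(G_F(x)) = ev_a(M_x) = a(x)$ after identifying $C(X,F)/M_x$ with $F$ via the Gelfand property (part~\rom{1} of Lemma~\ref{lem : propCAlg}). Thus $ev_a \circ G_F = a$ for every $a \in C(X,F)$, and the pullback of $\uptau_G$ along $G_F$ is precisely the initial topology $\uptau'$ on $X$ generated by the family $C(X,F)$.

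Two general observations then close the argument. First, every $a \in C(X,F)$ is continuous for the original topology of $X$, hence $\uptau'$ is coarser than it. Second, the always-valid inclusion $\uptau_Z \leq \uptau_G$ is preserved by the pullback, so $\uptau'$ is finer than (or equal to) the original topology. Combined, these two inequalities force $\uptau'$ to coincide with the original topology of $X$, and therefore $\uptau_G = \uptau_Z$ on $\textnormal{Max}(C(X,F))$.

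I do not anticipate a serious obstacle. The only point worth double-checking is that the identification $ev_a(M_x) = a(x)$ is well-posed, which uses exactly the Gelfand property of $C(X,F)$; once this is in place the rest is essentially formal. Note that the $F$-Tychonoff hypothesis is used only indirectly, through Theorem~\ref{thm: TGK}, which is what supplies the homeomorphism needed to pull both topologies back to $X$.
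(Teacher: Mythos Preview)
Your proof is correct and takes essentially the same approach as the paper: both invoke Theorem~\ref{thm: TGK} for the homeomorphism $G_F$ and use the identity $ev_a \circ G_F = a$ (the paper phrases this as commutativity of the triangle $f \circ G_F^{-1} = ev_f$). The paper's argument is simply terser, implicitly deducing $\uptau_G \leq \uptau_Z$ from the $\uptau_Z$-continuity of each $ev_f = f \circ G_F^{-1}$, whereas you spell out the pullback comparison more explicitly.
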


\begin{proof}
It follows from the fact that $G_F$ is a homeomorphism and the fact that the diagram
    \begin{center}
     \begin{tikzpicture}
    \matrix (m) [matrix of math nodes, name=m ,row sep=1.5mm,column sep=0.5mm,minimum width=0.5mm]
    {
    & \textnormal{Max}(C(X,F)) && \\
    &&& \hspace{1.5cm} f\circ G_F^{-1}=ev_f \\
    X && F & \\
    };
    \path[-stealth]
    (m-1-2) edge node[above, rotate=45]{$G_F^{-1}$} (m-3-1)
    (m-3-1) edge node[below]{$f$} (m-3-3)
    (m-1-2) edge node[above, rotate=-45]{$ev_f$} (m-3-3)
    ;
    \end{tikzpicture}
\end{center}
is commutative.
\end{proof}

\begin{prop}
    Let $A$ be a Gelfand $F$-algebra. Then $(\textnormal{Max}(A),\uptau_G)$ is compact iff $\sigma(a)$ is compact for all $a\in A$.
    \label{prop: MaxCom}
\end{prop}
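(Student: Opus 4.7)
The plan is to leverage the embedding $M\mapsto \pi_M$ that realizes $(\mathrm{Max}(A),\uptau_G)$ as a subspace of $F^A$ with the product topology, and to combine the observation recorded just before the statement, namely $\mathrm{Max}(A)\subset \prod_{a\in A}\sigma(a)$, with the closedness result already proved in Lemma~\ref{lem: MAX closed}.

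For the forward direction, I would observe that each evaluation $ev_a:(\mathrm{Max}(A),\uptau_G)\to F$ is continuous by the very definition of $\uptau_G$, and that $ev_a(\mathrm{Max}(A))=\sigma(a)$. Hence, if $\mathrm{Max}(A)$ is $\uptau_G$-compact, every $\sigma(a)$ is a continuous image of a compact space, hence compact. This is essentially a one-line argument.

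For the converse, the key step is to invoke Tychonoff's theorem on $\prod_{a\in A}\sigma(a)$: by assumption each $\sigma(a)$ is compact (and, since $F$ is Hausdorff, automatically closed in $F$), so the product is compact in the product topology. Next, I would use Lemma~\ref{lem: MAX closed} to conclude that $\mathrm{Max}(A)$ is closed in $F^A$; since $\prod_{a\in A}\sigma(a)$ carries the subspace topology inherited from $F^A$, intersecting shows that $\mathrm{Max}(A)$ is closed in $\prod_{a\in A}\sigma(a)$ as well. A closed subset of a compact space being compact then finishes the argument.

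The proof should be quite short and there is no substantial obstacle; the only thing to be careful about is making the identifications transparent, in particular that $\uptau_G$ really is the topology induced from the product topology on $F^A$ under $M\mapsto \pi_M$, so that both the compactness of images (for $\Rightarrow$) and the preservation of closedness when restricting from $F^A$ to $\prod_{a}\sigma(a)$ (for $\Leftarrow$) are immediate. No deeper analytic input is needed.
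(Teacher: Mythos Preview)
Your proposal is correct and matches the paper's own proof essentially line for line: the forward direction uses $\sigma(a)=ev_a(\textnormal{Max}(A))$ as a continuous image of a compact space, and the converse combines Tychonoff on $\prod_{a\in A}\sigma(a)$ with the closedness of $\textnormal{Max}(A)$ in $F^A$ from Lemma~\ref{lem: MAX closed}. You have simply made explicit the steps the paper leaves implicit.
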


\begin{proof}
    If $\uptau_G$ is compact then $\sigma(a) = ev_a(\textnormal{Max}(A))$ is compact. If $\sigma(a)$ is compact for all $a\in A$ then $\prod_{a\in A}\sigma(a)$ is compact, by Lemma~\ref{lem: MAX closed} it follows that $\uptau_G$ is compact.
\end{proof}

\begin{cor}
    Assume that $\sigma(a)$ is compact for all $a\in A$. The following are equivalent
    \begin{enumerate}[label = \enum]
        \item $\uptau_Z = \uptau_G$.
        \item $\uptau_Z$ is Hausdorff.
    \end{enumerate}
    \label{cor: espectC Z=G}
\end{cor}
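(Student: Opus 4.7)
The plan is to exploit the three ingredients already in hand: (a) we always have $\uptau_Z \leq \uptau_G$; (b) $\uptau_G$ is Hausdorff (remarked right after the definition of $\uptau_G$); and (c) under the standing assumption that $\sigma(a)$ is compact for every $a\in A$, Proposition~\ref{prop: MaxCom} gives that $\uptau_G$ is compact. So on the set $\textnormal{Max}(A)$ we are comparing a compact Hausdorff topology $\uptau_G$ with a coarser topology $\uptau_Z$.

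For \rom{1}$\Rightarrow$\rom{2} there is nothing to do: since $\uptau_G$ is Hausdorff and equals $\uptau_Z$, $\uptau_Z$ is Hausdorff as well.

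For \rom{2}$\Rightarrow$\rom{1} I would apply the standard trick that a continuous bijection from a compact space to a Hausdorff space is automatically a homeomorphism. More precisely, consider the identity map
\[
\mathrm{id}:(\textnormal{Max}(A),\uptau_G)\longrightarrow (\textnormal{Max}(A),\uptau_Z).
\]
It is continuous because $\uptau_Z\leq \uptau_G$. Its domain is compact by (c), and its codomain is Hausdorff by hypothesis \rom{2}. Hence $\mathrm{id}$ is closed, which forces $\uptau_Z=\uptau_G$.

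No step looks like a genuine obstacle here; the whole content of the corollary is packaging the compactness result Proposition~\ref{prop: MaxCom} together with the elementary compact-to-Hausdorff lemma. The only thing to be careful about is to verify explicitly that the hypothesis on spectra is what supplies compactness of $\uptau_G$ (so that the argument does not silently use something stronger), which is exactly the content of Proposition~\ref{prop: MaxCom}.
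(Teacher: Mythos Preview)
Your proof is correct and is exactly the argument the paper has in mind: the corollary is stated without proof immediately after Proposition~\ref{prop: MaxCom}, and the intended reasoning is precisely the compact-to-Hausdorff comparison you wrote out, using $\uptau_Z\leq\uptau_G$, Hausdorffness of $\uptau_G$, and compactness of $\uptau_G$ from Proposition~\ref{prop: MaxCom}.
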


\subsection{Canonical Uniformity over a Gelfand Algebra}

 Let $F$ be a topological field a \emph{base of symmetric neighborhoods of zero} is a collection $\mathcal{B}$ of open neighborhoods of $0$ satisfying:

\begin{enumerate}[label=\enum]
    \item $\bigcap \mathcal{B}=\{0\}$. 
    \item $\forall U\in \mathcal{B} $ we have $ U = -U$.
    \item $\forall U,V\in\mathcal{B} \hspace{1mm} \exists  W\in\mathcal{B} $ such that $ W\subset U\cap V$. 
    \item $\forall U\in\mathcal{B} \hspace{1mm} \exists V\in\mathcal{B} $ such that $ V+V \subset U$.
    \item $\forall U \in \mathcal{B} \hspace{1mm} \exists V\in\mathcal{B}$ such that $VV\subset U$.
    \item $\forall U\in\mathcal{B} \hspace{1mm} \forall x\in K \hspace{1mm} \exists V\in \mathcal{B}$ such that $xV\subset U$.
    \item $\forall U\in\mathcal{B} \hspace{1mm} \exists V\in\mathcal{B}$ such that $(1+V)^{-1}\subset 1+U$.
\end{enumerate}

\noindent Such a collection always exists, and the translations $a+U$ with $a\in F$ and $U\in\mathcal{B}$ form a basis for the topology \cite{Witold}. 

\begin{defn}
    Let $A$ be a Gelfand algebra. Define the \emph{canonical uniformity} over $A$ by taking the subsets
    \begin{align*}
        V_U &= \{(a,b)\in A\times A: (b-a)+M\in U \hspace{1mm} \forall M\in \textnormal{Max}(A) \} \\
        &=\{(a,b)\in A\times A: \sigma(b-a)\subset U\}
    \end{align*}
    with $U\in\mathcal{B}$, where $\mathcal{B}$ is a base of symmetric neighborhoods of zero, as basic entourages. 
    \label{def : nunif}
\end{defn}

It is easy to verify that $\{V_U\}_{U\in\mathcal{B}}$ indeed constitutes a basis for a uniformity and it does not depend on the choice of a base of symmetric neighborhoods of zero. 


In the case $A=C(X,F)$ with $X$ compact Hausdorff, we have $\sigma(f) = f(X)$. Since $F$ is a uniform space it follows that the induced uniform topology is the topology of uniform convergence, which is equivalent to the compact-open topology, the proof of this fact follows the same steps as in (\cite{Dugundji}, Ch 12, Theorem 7.2).

Recall that for uniform spaces $X$ and $Y$ the space of uniformly continuous functions from $X$ to $Y$ is a uniform space, which is complete whenever $Y$ is complete (\cite{Isbell}, Ch 3, Theorem 31). Moreover, if $X$ is compact then the space of uniformly continuous functions coincides with $C(X,Y)$ (with the compact-open topology). Therefore:

\begin{thm}
    Let $F$ be a topological field complete for its group uniformity. Then $C(X,F)$ is complete with respect to the canonical uniformity.
    \label{cor : Ccomp}
\end{thm}

Now, if $(F,|\cdot|)$ is a normed field and $A$ a $F$-algebra, the \emph{spectral radius} of $a\in A$ is $\rho(a)= \sup\{|\lambda|: \lambda\in\sigma(a)\}$. Let $X$ be a compact Hausdorff space, then $C(X,F)$ is normable, to be more precise we can define the sup norm, $\norm{f}_\infty = \sup\{|f(x)|:x\in X\}$. Notice that, since $G_F$ is surjective, we have:

\begin{align*}
    \norm{f}_\infty &= \sup \hspace{0,5mm} \{|f(x)|:x\in X\} \\
    &= \sup \hspace{0,5mm} \{|\lambda|: \lambda\in\sigma(f)\} \\
    &= \rho(f)
\end{align*}

On the other hand, given a normed Gelfand $F$-algebra $A$, it follows from the inequality $|(y-x) + M|\leq \norm{y-x}$ that $V_{U_{\epsilon}} \supset V_\epsilon$, where $V_{U_\epsilon}=\{(x,y): \sigma(x-y)\subset U_\epsilon\}$ and $V_\epsilon =\{(x,y): \norm{x-y}<\epsilon\}$ are the basic entourages of the natural and metric uniformity, respectively. To summarize, the Gelfand map (defined below), if $(\textnormal{Max}(A),\uptau_G)$ is compact, is norm decreasing and is an isometry exactly when the norm and the spectral radius coincide, moreover, the spectral radius is a (semi)norm inducing the canonical uniformity. 

\begin{defn}
Let $A$ be a Gelfand $F$-algebra. Considering the maximal spectrum with the Gelfand topology define the \emph{Gelfand map} as
\[
I_F: A\to C(\textnormal{Max}(A),F), \hspace{3mm} I_F(a) = ev_a
\]  
\label{def:GelMap}
\end{defn}

\begin{cor}
    Any Gelfand semisimple $F$-algebra may be represented as an algebra of continuous functions. 
\end{cor}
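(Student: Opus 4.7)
The plan is to show that the Gelfand map $I_F: A \to C(\textnormal{Max}(A), F)$ from Definition~\ref{def:GelMap} is an injective $F$-algebra morphism, so that $A$ is isomorphic to the subalgebra $I_F(A)$ of continuous $F$-valued functions on the space $(\textnormal{Max}(A), \uptau_G)$. Thus the representation is canonically built from the algebra itself: the underlying space is the maximal spectrum (with the Gelfand topology) and each $a \in A$ is represented by its evaluation $ev_a$.

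The verification splits into three short pieces that use each hypothesis exactly once. First, each $ev_a$ is continuous by the very definition of $\uptau_G$, which is the initial topology making all evaluations continuous, so $I_F$ indeed lands in $C(\textnormal{Max}(A), F)$. Second, the Gelfand hypothesis $A/M = F$ says that every canonical projection $\pi_M: A \to A/M$ is an $F$-algebra morphism with values in $F$; since $I_F(a)(M) = \pi_M(a)$, the pointwise identities $I_F(a+b) = I_F(a) + I_F(b)$, $I_F(ab) = I_F(a)\, I_F(b)$, $I_F(\lambda a) = \lambda I_F(a)$ and $I_F(1) = 1$ follow immediately from the corresponding identities for $\pi_M$, making $I_F$ a unital morphism of $F$-algebras. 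Third, semisimplicity forces injectivity: $I_F(a) = 0$ means $a \in M$ for every $M \in \textnormal{Max}(A)$, so $a \in \textnormal{Jrad}(A) = \{0\}$.

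There is no genuine obstacle: all the content lies in the definitions, with the Gelfand property ensuring that each $ev_a$ is a well-defined $F$-valued function and semisimplicity promoting $I_F$ to an embedding. The nontrivial questions — identifying the image $I_F(A)$ with all of $C(\textnormal{Max}(A), F)$, ensuring that $(\textnormal{Max}(A), \uptau_G)$ is compact and $F$-Tychonoff, or comparing $\uptau_G$ with $\uptau_Z$ (Lemma~\ref{lem: Z=G}, Proposition~\ref{prop: MaxCom}) — are precisely what the remaining sections will address, and are what require completeness and Stone--Weierstrass-type hypotheses.
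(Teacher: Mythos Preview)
Your proof is correct and matches the paper's approach exactly: the corollary is stated without proof in the paper, as an immediate consequence of the definition of the Gelfand map, and the details you supply are precisely those recorded in the very next result (Theorem~\ref{thm: GelMap}\,\rom{1}), namely that $I_F$ is a homomorphism with kernel $\textnormal{Jrad}(A)$ and hence injective under semisimplicity.
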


\noindent The following theorem is a direct consequence of the definition.

\begin{thm}
    Let $A$ be a Gelfand $F$-algebra then
    \begin{enumerate}[label = \enum]
        \item $I_F$ is a homomorphism of algebras, its kernel is $\textnormal{Jrad}(A)$, thus it is injective iff $A$ is semisimple.
        \item $I_F$ is uniformly continuous. 
    \end{enumerate}
    \label{thm: GelMap}
\end{thm}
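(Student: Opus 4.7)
The plan is to treat the two assertions separately, working only from the defining identity $I_F(a)(M) = a+M$ and the Gelfand hypothesis $A/M = F$.

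For (i), the Gelfand property makes each canonical projection $\pi_M \colon A \to A/M = F$ into a unital $F$-algebra morphism, and by definition $I_F(a)(M) = \pi_M(a)$. Since the operations in $C(\textnormal{Max}(A),F)$ are computed pointwise, I would derive the identities $I_F(a+b) = I_F(a)+I_F(b)$, $I_F(ab) = I_F(a)\,I_F(b)$ and $I_F(1)=1$ by reading them off coordinate-wise from the corresponding identities for every $\pi_M$. For the kernel, $I_F(a) \equiv 0$ says $a+M=0$ for every $M$, i.e.\ $a \in \bigcap_{M \in \textnormal{Max}(A)} M = \textnormal{Jrad}(A)$, so $\ker I_F = \textnormal{Jrad}(A)$, and injectivity iff semisimplicity is then immediate.

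For (ii), the key observation is that, for all $a,b\in A$ and every $M\in\textnormal{Max}(A)$,
\[
\bigl(I_F(b) - I_F(a)\bigr)(M) \;=\; (b-a) + M \;\in\; \sigma(b-a),
\]
which is exactly the characterization of the spectrum for Gelfand algebras recalled in Section~\ref{sec : 1}. Equipping $C(\textnormal{Max}(A),F)$ with the natural uniformity of uniform convergence inherited from $F^{\textnormal{Max}(A)}$, whose basic entourages are $W_U = \{(f,g) : g(M)-f(M) \in U \text{ for all } M\}$ with $U\in\mathcal{B}$, I would pair $W_U$ with the basic entourage $V_U$ of $A$ from Definition~\ref{def : nunif}: if $(a,b)\in V_U$, i.e.\ $\sigma(b-a)\subset U$, the displayed inclusion shows $(I_F(a),I_F(b))\in W_U$, giving uniform continuity.

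There is really no obstacle; the proof is essentially a repackaging of definitions. The only interpretive point worth flagging is which uniformity is placed on $C(\textnormal{Max}(A),F)$. With the uniformity of uniform convergence the above is the whole story. If one instead prefers to view $C(\textnormal{Max}(A),F)$ as itself a Gelfand $F$-algebra and use its canonical uniformity (Definition~\ref{def : nunif}), one adds the remark that the spectrum in $C(\textnormal{Max}(A),F)$ of $I_F(b-a)$ is contained in its image $\{(b-a)+M : M\in\textnormal{Max}(A)\} = \sigma(b-a)$, so that the same inclusion $\sigma(b-a)\subset U$ continues to place $(I_F(a), I_F(b))$ in the corresponding basic entourage.
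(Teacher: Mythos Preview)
Your proof is correct and is essentially the same approach as the paper's, which simply declares the theorem ``a direct consequence of the definition'' without further argument. You have merely filled in the routine details the authors chose to omit.
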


Notice that for a Gelfand $F$-algebra $A$ such that $(\textnormal{Max}(A),\uptau_G)$ is compact if $I_F$ is injective then it is also a uniform embedding; that is, $A$ and $I_F(A)$ are uniformly isomorphic, because for $a,b\in A$ we have $\sigma(a-b) = ev_{a-b}(\textnormal{Max}(A)) = \sigma(I_F(a)-I_F(b))$, thus for any $U\subset F$ symmetric neighborhood of $0$ we have 

\[
I_F\times I_F(V_U) = \{(ev_a,ev_b)\in  C(\textnormal{Max}(A),F)\times C(\textnormal{Max}(A),F): \sigma(ev_{a-b}) \subset U\} 
\]

\noindent with $V_U\subset A\times A$ as in Definition~\ref{def : nunif}. The set above is exactly a basic entourage for the subspace uniformity over $I_F(A)$, from which follows that $I_F^{-1}$ is uniformly continuous. 

Now, even if $(\textnormal{Max}(A),\uptau_G)$ is not compact, if $I_F$ is injective then is going to be an algebraic isomorphism between $A$ and its image.


The canonical uniformity captures some of the algebraic information of the algebra, for instance, it is not hard to prove that $A$ is semisimple iff the canonical uniformity is separated. Moreover,

\begin{prop}
    Let $A$ be a Gelfand $F$-algebra. Then $U(A)$, the set of units of  $A$, is open (in the uniform topology) iff $\sigma(a)$ is closed (in $F$) for all $a\in A$.
\end{prop}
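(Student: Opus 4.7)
The argument will hinge on the observation that $a\in U(A)$ iff $0\notin \sigma(a)$, since $a-0=a$ is invertible precisely when $0$ is not in the spectrum. Basic neighborhoods of $a$ in the canonical uniform topology are the sets $V_U[a]=\{b\in A:\sigma(b-a)\subset U\}$, where $U$ ranges over a base $\mathcal{B}$ of symmetric neighborhoods of $0$ in $F$. Both directions will amount to translating between ``$0$ is separated from $\sigma(a)$ by some symmetric $U$'' and ``$V_U[a]$ is contained in $U(A)$''.

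For the forward direction, I would assume every $\sigma(a)$ is closed and fix $a\in U(A)$. Since $F\setminus\sigma(a)$ is open and contains $0$, I pick $U\in\mathcal{B}$ with $U\cap\sigma(a)=\emptyset$. The claim to establish is $V_U[a]\subset U(A)$: if $b\in V_U[a]$ failed to be a unit, some $M\in\textnormal{Max}(A)$ would give $b+M=0$, forcing $(b-a)+M=-(a+M)$. But $(b-a)+M\in U$ by hypothesis, so $a+M\in -U=U$, contradicting $a+M\in\sigma(a)$ together with $\sigma(a)\cap U=\emptyset$.

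For the converse, I would assume $U(A)$ is open, fix $a\in A$ and $\lambda\notin\sigma(a)$, and try to exhibit a neighborhood of $\lambda$ avoiding $\sigma(a)$. Since $a-\lambda\in U(A)$, openness furnishes $U\in\mathcal{B}$ with $V_U[a-\lambda]\subset U(A)$. The small but crucial observation is that scalars have singleton spectra, $\sigma(\alpha\cdot 1)=\{\alpha\}$, so for any $\mu\in\lambda+U$ one has $(a-\mu)-(a-\lambda)=\lambda-\mu$ and hence $\sigma((a-\mu)-(a-\lambda))=\{\lambda-\mu\}\subset U$ by symmetry of $U$. Thus $a-\mu\in U(A)$, meaning $\mu\notin\sigma(a)$, which gives $\lambda+U\subset F\setminus\sigma(a)$.

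There is no real technical obstacle here; the proof is essentially bookkeeping on entourages. The points to keep straight are the symmetry $U=-U$ (used in both directions) and the identification $\sigma(a)=ev_a(\textnormal{Max}(A))$ provided by the Gelfand property, which is what converts a purely algebraic condition ``some $M$ satisfies $b+M=0$'' into a disjointness statement between $\sigma(a)$ and the chosen neighborhood $U$ of $0$.
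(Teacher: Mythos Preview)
Your proof is correct and follows essentially the same approach as the paper. The only cosmetic difference is in the direction ``$U(A)$ open $\Rightarrow$ $\sigma(a)$ closed'': the paper observes in one line that $F\setminus\sigma(a)=(a+U(A))\cap F$ is open because translations are homeomorphisms and the subspace topology on $F\subset A$ agrees with the field topology, whereas you unpack this by hand with entourages---but the content is identical.
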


\begin{proof}
    Suppose $U(A)$ is open then $F\setminus\sigma(a) = \{u\in A: a-u\in U(A)\}\cap F = (a+U(A))\cap F$ is open, since $a+U(A)$ is open in $A$. Now, assume that $\sigma(a)$ is closed for all $a\in A$. Let $a\in U(A)$, then $0\not\in \sigma(a)$. Take $U$ a neighborhood of $0$ such that $\sigma(a)\cap U = \emptyset$ and consider the neighborhood $V_U[a]$ of $a$. If $b\in V_U[a]$ then $b-a + M = \lambda \in U$ then $\lambda\not\in \sigma(a)$. Since $b+M = a - (a-b) + M \neq 0$  for all  $M\in \textnormal{Max}(A)$ we have that $b$ is invertible, therefore, $V_U[a]\subset U(A)$.
\end{proof}

\begin{prop}
    Let $A$ be a Gelfand $F$-algebra. If $A$ is semisimple then $F$ is closed in $A$.
\end{prop}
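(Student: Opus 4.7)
My plan is to realize $F \subset A$ as the preimage of a closed set under the Gelfand map $I_F$. Since $A$ is Gelfand and semisimple, Theorem~\ref{thm: GelMap} gives $I_F : A \to C(\textnormal{Max}(A),F)$ both injective (part \rom{1}) and uniformly continuous (part \rom{2}), so it suffices to exhibit a closed subset of $C(\textnormal{Max}(A),F)$ whose preimage under $I_F$ is exactly $F \subset A$.

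The natural candidate is the set $C$ of constant functions on $\textnormal{Max}(A)$. Since $F$ is Hausdorff (any proper topological field being Hausdorff), the complement of $C$ in $F^{\textnormal{Max}(A)}$ is $\bigcup_{M \neq N}\{f : f(M) \neq f(N)\}$, an open set in the product topology; as the canonical uniform topology on $C(\textnormal{Max}(A),F)$ refines the product topology, $C$ is closed in it as well. I would then verify $I_F^{-1}(C) = F$. The inclusion $\supseteq$ is immediate: for $\lambda \in F \subset A$, the quotient $A \to A/M = F$ restricts to the identity on $F$, so $ev_\lambda \equiv \lambda$. Conversely, if $ev_a$ is the constant $\mu \in F$, then $a - \mu \cdot 1$ lies in every maximal ideal, hence in $\textnormal{Jrad}(A) = \{0\}$ by semisimplicity, so $a = \mu \cdot 1 \in F$. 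Continuity of $I_F$ then yields that $F$ is closed in $A$.

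I do not anticipate a real obstacle: the three ingredients---closedness of the constants (from Hausdorffness of $F$), the equality $I_F^{-1}(C) = F$ (from the Gelfand property combined with semisimplicity), and continuity of $I_F$ (Theorem~\ref{thm: GelMap})---are each already established in the excerpt, and the argument consists of assembling them in this order. The only point worth a moment's care is keeping track of which topology on $C(\textnormal{Max}(A),F)$ one uses; but since we only need the forward implication that $C$ is closed and $I_F$ is continuous for \emph{the same} topology, any choice that makes the evaluations $f \mapsto f(M)$ continuous will do.
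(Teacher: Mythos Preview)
Your proof is correct and takes a genuinely different route from the paper. The paper argues directly with nets: given $(a_\gamma)\subset F$ converging to $a\in A$, it fixes $W$ with $W+W\subset U$, observes that each $\lambda_M:=a+M$ satisfies $\lambda_M-a_\gamma\in W$, and deduces $\lambda_M-\lambda_N\in U$ for every $U$, hence $\sigma(a)$ is a singleton $\{\lambda\}$; semisimplicity then forces $a=\lambda$. Your argument instead packages the same idea through the Gelfand map: you note that the constants form a closed set in $F^{\textnormal{Max}(A)}$ (Hausdorffness of $F$), that $I_F^{-1}(\text{constants})=F$ (semisimplicity), and that $I_F$ is continuous (Theorem~\ref{thm: GelMap}), so $F$ is closed as a preimage. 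What you gain is economy---the net computation is absorbed into the already-proved continuity of $I_F$---at the cost of invoking that earlier result. The paper's version is self-contained and makes the role of the uniformity more visible. Your closing remark about the choice of topology on the target is apt: since each coordinate $a\mapsto a+M$ is uniformly continuous from $(A,\text{canonical})$ to $F$, even the product topology on $F^{\textnormal{Max}(A)}$ suffices, and you never actually need $C(\textnormal{Max}(A),F)$ to carry its own canonical uniformity.
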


\begin{proof}
  Let $(a_\gamma)_\gamma\subset F$ a convergent net to $a\in A$. Let $W,U\subset F$ a neighborhoods of $0$ such that $W + W \subset U$ then $a_\gamma\in V_W[a]$, hence, $a-a_\gamma + M \in W$ for all $M\in\textnormal{Max}(A)$. Let $\lambda_M = a+M $, for all $M,N \in \textnormal{Max}(A)$ we have
  \[
  \lambda_M - \lambda_N = ( \lambda_M - a_\gamma )  - ( \lambda_N - a_\gamma) \in W + W \subset U,
  \]
  therefore, $\lambda_M = \lambda_N = \lambda$. By the previous argument, we have $\sigma(a) = \{\lambda\}$, since $A$ is semisimple it follows $a=\lambda \in F$.
\end{proof}

\begin{cor}
Let $X$ be a compact Hausdorff spaces then $F$ is closed in $C(X,F)$ and $U(C(X,F))$ is open.
\end{cor}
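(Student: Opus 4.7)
The plan is to deduce both claims directly by combining Lemma~\ref{lem : propCAlg} with the two propositions immediately preceding the corollary. First I observe that Lemma~\ref{lem : propCAlg}~\rom{1}--\rom{3} require only that $X$ be compact Hausdorff, so $C(X,F)$ is a Gelfand semisimple $F$-algebra in which the spectrum $\sigma(f)=f(X)$ is compact for every $f$.

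For the first assertion, since $C(X,F)$ is Gelfand and semisimple, the preceding proposition applies verbatim and gives that $F$ is closed in $C(X,F)$ with respect to the canonical uniform topology.

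For the second assertion, the other preceding proposition reduces the openness of $U(C(X,F))$ to checking that $\sigma(f)$ is closed in $F$ for every $f\in C(X,F)$. By Lemma~\ref{lem : propCAlg}~\rom{3} we have that $\sigma(f)$ is compact; and, as noted in the paper right after the definition of $F$-Tychonoff spaces, every proper topological field is Hausdorff, so compact subsets of $F$ are closed. Hence $\sigma(f)$ is closed and the criterion applies.

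Since every step is an immediate application of a previously established result, I do not anticipate any obstacle; the only mild point worth noting is the hypothesis mismatch (\emph{compact Hausdorff} rather than \emph{compact $F$-Tychonoff}), which is harmless because all three relevant properties in Lemma~\ref{lem : propCAlg} are proved under the weaker hypothesis.
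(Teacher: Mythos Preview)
Your proof is correct and is exactly the argument the paper has in mind: the corollary is stated without proof because it follows immediately from the two preceding propositions once Lemma~\ref{lem : propCAlg}~\rom{1}--\rom{3} guarantees that $C(X,F)$ is Gelfand, semisimple, and has compact (hence closed, $F$ being Hausdorff) spectra. Your remark that only compact Hausdorff is needed, not $F$-Tychonoff, is also in line with the paper's hypotheses.
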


Recall that in a Gelfand algebra any maximal ideal could be regarded as both a subset of the algebra or as an algebra morphism to the field. From this it follows that the preimage of a maximal ideal is a maximal ideal. Given a map $\phi:A\to B$ between Gelfand algebras and a maximal ideal $M\subset B$ it is easy to verify that $\phi^{-1}(M) = \textnormal{ker}(\pi_M\circ\phi)$, where $\pi_M; B\to F$ is the canonical projection.

\begin{thm}
    Let $A,B$ Gelfand $F$-algebras and $\phi: A\to B$ a $F$-algebra morphism. Then $\phi$ is uniformly continuous.
    \label{thm : UnifCont}
\end{thm}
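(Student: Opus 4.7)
The plan is to reduce uniform continuity to a spectral contraction property: I will show that $\sigma(\phi(c))\subset\sigma(c)$ for every $c\in A$. Once this is established, uniform continuity is immediate. Given any symmetric neighborhood $U$ of $0$ in $F$, if $(a,b)\in V_U$, i.e.\ $\sigma(b-a)\subset U$, then by $F$-linearity $\phi(b)-\phi(a)=\phi(b-a)$, so $\sigma(\phi(b)-\phi(a))=\sigma(\phi(b-a))\subset\sigma(b-a)\subset U$, meaning $(\phi(a),\phi(b))\in V_U$. Thus the same basic entourage $V_U$ on $A$ witnesses the entourage $V_U$ on $B$, and $\phi$ is even ``non-expanding'' with respect to the basic entourages.

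The key step is the spectral contraction. The crucial observation is that, for every $M\in\textnormal{Max}(B)$, the composite $\pi_M\circ\phi: A\to F$ is an $F$-algebra morphism (here $\pi_M: B\to B/M=F$ is the canonical projection, available because $B$ is Gelfand). Since $(\pi_M\circ\phi)(\lambda\cdot 1)=\lambda$, this composite is surjective, hence its kernel $\phi^{-1}(M)$ is a maximal ideal of $A$. Because $A$ is Gelfand, $A/\phi^{-1}(M)=F$ and the projection $\pi_{\phi^{-1}(M)}: A\to F$ is the unique $F$-algebra morphism with kernel $\phi^{-1}(M)$. Therefore
\[
\pi_M\circ\phi\;=\;\pi_{\phi^{-1}(M)},
\]
which, evaluated at $c\in A$, gives the identity $\phi(c)+M = c+\phi^{-1}(M)$ in $F$.

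Taking the union over all $M\in\textnormal{Max}(B)$ now yields
\[
\sigma(\phi(c))=\{\phi(c)+M:M\in\textnormal{Max}(B)\}=\{c+\phi^{-1}(M):M\in\textnormal{Max}(B)\}\subset\{c+N:N\in\textnormal{Max}(A)\}=\sigma(c),
\]
completing the contraction and hence the uniform continuity of $\phi$.

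There is essentially no obstacle once one recognizes that the Gelfand hypothesis on both sides forces $\pi_M\circ\phi=\pi_{\phi^{-1}(M)}$; all the analytic content is packaged into the definition of the canonical uniformity in terms of spectra, so no uniform structure on the field need be invoked beyond the choice of the base $\mathcal{B}$. The argument also makes transparent why the functor $\textsf{M}_F$ on $\mathcal{G}\textsf{Alg}_F$ is well defined at the level of sets: the same identity $\pi_M\circ\phi=\pi_{\phi^{-1}(M)}$ is what forces $\phi^{-1}(M)$ to be maximal.
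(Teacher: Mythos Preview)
Your proof is correct and follows essentially the same approach as the paper. Both arguments hinge on the identity $\pi_N\circ\phi=\pi_{\phi^{-1}(N)}$ for $N\in\textnormal{Max}(B)$ (equivalently, $\phi(c)+N=c+\phi^{-1}(N)$), and both conclude that $(\phi\times\phi)(V_U^A)\subset V_U^B$; you simply package the intermediate step as the spectral contraction $\sigma(\phi(c))\subset\sigma(c)$, whereas the paper works directly with the entourage description $(a-b)+M\in U$.
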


\begin{proof}
    Let $U\subset F$ a symmetric neighborhood of $0$. We denote by $V_U^A, V_U^B$ the basic entourages of the canonical uniformity over $A$ and $B$, respectively. Let $(a,b)\in V_U^A$, then for all $M\in\textnormal{Max}(A)$ we have $(a-b) + M \in U$, in particular, $(a-b)+\phi^{-1}(N)$ for all $N\in\textnormal{Max}(B)$. Since $\phi^{-1}(N) = \textnormal{ker}(\pi_N\circ\phi)$ it follows that $\phi(a)-\phi(b) + N \in U$, therefore, $(\phi\times\phi)(V_U^A) \subset V_U^B$.
\end{proof}

\vspace{2mm}

\noindent It follows from the remark after Definition~\ref{def:GelMap} that:
 
\begin{cor}

 Let $f:A\to B$ be a $*$-morphism of $C^*$-algebras. Then $f$  is continuous and norm decreasing.
    
\end{cor}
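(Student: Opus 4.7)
The plan is to deduce this corollary directly from Theorem~\ref{thm : UnifCont} together with the identification of the canonical uniformity with the norm uniformity that was spelled out in the remark following Definition~\ref{def:GelMap}. First, I would verify that the setting of Theorem~\ref{thm : UnifCont} applies: a (commutative) $C^*$-algebra is a Banach $\mathbb{C}$-algebra, so the Gelfand--Mazur theorem forces every quotient by a maximal ideal to be $\mathbb{C}$, i.e. $A$ and $B$ are Gelfand $\mathbb{C}$-algebras. Viewing $f$ just as a $\mathbb{C}$-algebra morphism, Theorem~\ref{thm : UnifCont} then yields that $f$ is uniformly continuous with respect to the canonical uniformities on $A$ and $B$.

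The next step is to recognise that, in this $C^*$ setting, the canonical uniformity agrees with the norm uniformity. This is where the remark after Definition~\ref{def:GelMap} does the work: it exhibits the spectral radius $\rho$ as a seminorm inducing the canonical uniformity, and identifies norm and canonical uniformities precisely when $\|\cdot\| = \rho$. Since every element of a commutative $C^*$-algebra is normal, the $C^*$-identity $\|a^*a\| = \|a\|^2$ gives $\|a\| = \rho(a)$, so the two uniformities coincide and the uniform continuity of $f$ becomes plain norm continuity.

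For the norm-decreasing assertion, I would use the elementary fact that algebra morphisms contract spectra, $\sigma_B(f(a)) \subset \sigma_A(a)$, since the image of an invertible element is invertible; hence $\rho(f(a)) \leq \rho(a)$, and combined with $\|\cdot\| = \rho$ this yields $\|f(a)\| \leq \|a\|$. The main subtlety is that the paper's framework of Gelfand algebras is inherently commutative, so for a genuinely non-commutative $C^*$-algebra one must reduce to the commutative case through $\|f(a)\|^2 = \|f(a)^*f(a)\| = \|f(a^*a)\|$ and the commutative $C^*$-subalgebra generated by $a^*a$; apart from this standard reduction the statement is a clean consequence of Theorem~\ref{thm : UnifCont} and the coincidence of uniformities.
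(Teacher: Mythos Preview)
Your proposal is correct and follows the paper's intended route: the paper gives no argument beyond the line ``It follows from the remark after Definition~\ref{def:GelMap},'' and you have spelled out precisely what that means --- Gelfand--Mazur to secure the Gelfand property, Theorem~\ref{thm : UnifCont} for uniform continuity in the canonical uniformity, and the identity $\|a\|=\rho(a)$ in commutative $C^*$-algebras (from the remark) to convert this to norm continuity and norm contraction. Your explicit reduction from the non-commutative case via $\|f(a)\|^2=\|f(a^*a)\|$ is a useful addition that the paper, being entirely commutative in scope, does not address.
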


As we have seen Gelfand algebras can be endowed with a canonical uniform structure, moreover, this uniformity interacts pretty well with the algebraic structure as we can observe in the previous results. Except for Theorem~\ref{thm : UnifCont}, which is a generalization of a well-known fact about $C^*$-algebras, the previous propositions are basic results in Banach complex algebras. Recall by the Gelfand-Mazur theorem any (unital) complex Banach algebra is a Gelfand algebra. 

The importance of Theorem~\ref{thm : UnifCont} cannot be stressed enough. It shows that the category of Gelfand algebras and, consequently the categories $\textsf{CAlg}_F$, remains unchanged if we regard their objects as uniform structures. This will become important in Section~\ref{sec : 4} where we give a characterization of the algebras of continuous functions for a disconnected field.

\section{Gelfand representation for complete disconnected fields}

\label{sec : 4}

In this section, we generalize Van der Put theorem to any complete disconnected topological field. Here, completeness is with respect to the canonical group uniformity. As a consequence, we get a full description of the category $\textsf{CAlg}_F$ for such fields. 

Furthermore, in the process we show that everything could be built up from the algebra alone. This in contrast to the classical proofs which rely on famous theorems from analysis, namely, Stone-Weierstrass theorem and Alaoglu theorem.

The idempotent elements of a ring $R$ are the elements $a\in R$ such that $a^2=a$. The set of idempotents of a ring, $B(R)$, forms a boolean algebra with the operations $a^*=1-a$, $a\wedge b= ab$, and $a\vee b= a+b-ab$, $0,1$ being, respectively, the top and bottom elements. Moreover, given $a,b\in B(R)$ we have the following relations: $\textnormal{Max}(R)\setminus D(a)=D(1-a)$, $D(a)\cap D(b)=D(a)\wedge D(b)$ and $D(a)\cup D(b)=D(a\vee b)$, for the last one consider $D(1-a)\cap D(1-b)= D((1-a)(1-b))=D(1-a\vee b)$. 

\begin{prop}
Let $X$ be a Stone space and $F$ a disconnected field. Then $\langle B(C(X,F))\rangle_F$, the $F$-vector space generated by the idempotents, is uniformly dense in $C(X,F)$.
\label{prop : CharD}
\end{prop}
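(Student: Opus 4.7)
The plan is to realize $\langle B(C(X,F))\rangle_F$ concretely as the $F$-algebra of step functions on finite clopen partitions of $X$, and then use compactness together with the clopen basis of $X$ (a Stone space) to uniformly approximate any $f \in C(X,F)$ by such a step function.

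First I would identify the idempotents. If $e \in C(X,F)$ satisfies $e^2 = e$, then pointwise $e(x)(e(x)-1) = 0$ forces $e(x) \in \{0,1\}$ since $F$ is a field. Because $F$ is Hausdorff, $0$ and $1$ can be separated by disjoint open sets, whose $e$-preimages partition $X$ into two clopen pieces $e^{-1}(0)$ and $e^{-1}(1)$. Hence $B(C(X,F)) = \{\chi_V : V \subseteq X \text{ clopen}\}$, and every element of $\langle B(C(X,F))\rangle_F$ has the form $\sum_{i=1}^n \lambda_i \chi_{V_i}$ with $\lambda_i \in F$ and $V_i \subseteq X$ clopen.

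For the approximation, fix $f \in C(X,F)$ and a basic entourage $V_U$ of the canonical uniformity determined by a symmetric $U \in \mathcal{B}$. For each $x \in X$, the set $f^{-1}(f(x) + U)$ is an open neighborhood of $x$, so since $X$ is zero-dimensional there exists a clopen $N_x$ with $x \in N_x \subseteq f^{-1}(f(x) + U)$. Compactness extracts a finite subcover $N_{x_1}, \dots, N_{x_n}$, which we disjointify into a clopen partition by $V_i = N_{x_i} \setminus \bigcup_{j < i} N_{x_j}$. Setting $g = \sum_{i=1}^n f(x_i)\chi_{V_i} \in \langle B(C(X,F))\rangle_F$, for any $x \in V_i$ we have $f(x) - g(x) = f(x) - f(x_i) \in U$, so $\sigma(f - g) = (f - g)(X) \subseteq U$, i.e., $(f, g) \in V_U$, yielding uniform density.

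The argument is essentially Stone--Weierstrass specialized to the totally disconnected compact Hausdorff setting, and the main (minor) technical point is the passage from local to global approximation, which is handled by compactness together with the clopen basis of $X$. Notably, disconnectedness of $F$ plays no role once Hausdorffness is invoked to identify the idempotents; it is built into the hypotheses because $\textsf{KH}_F = \textsf{Stone}$ precisely in this regime (Lemma~\ref{thm:KH}).
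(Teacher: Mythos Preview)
Your proof is correct and follows essentially the same approach as the paper: choose clopen neighborhoods on which $f$ varies within $U$, extract a finite subcover by compactness, disjointify, and form the step function $g=\sum f(x_i)\chi_{V_i}$. Your additional identification of $B(C(X,F))$ with the characteristic functions of clopens and your explicit disjointification merely spell out what the paper leaves implicit, and your remark that only Hausdorffness of $F$ (not disconnectedness) is used in the approximation step is accurate.
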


\begin{proof}
    Let $f\in C(X,F)$ and $U\subset F$ a neighborhood of $0$. For each $x\in X$ choose a clopen neighborhood $C_x$ of $x$ such that $f(C_x)\subset f(x)+U$. Since $X$ is compact there exist $C_{x_1},...,C_{x_n}$ covering $X$, and without lost of generality we can assume they are disjoint. Define $g_U = \sum_{i=1}^n f(x_i)\chi_{C_{x_i}}$, where $\chi_{C_{x_i}}$ is the characteristic function of $C_{x_i}$, then $(g_U-f)(X) \subset U$; equivalently, $(f(x),g_U(x))\in V_U$ for all $x\in X$.
\end{proof}

\begin{prop}
    Let $A$ be  a $F$-algebra, then, $a\in \langle B(A) \rangle_F$
    \begin{enumerate}[label=\enum]
        \item $a$ is a linear combination of orthogonal idempotents; i.e., $a=\sum_{i=1}^n\lambda_ie_i$  with $\lambda_i\in F$, $e_i\in B(A)$ and $e_ie_j=0$ for $i\neq j$.
        \item The elements of $\sigma(a)$ are precisely the $\lambda_i's$, plus $0$ if $a$ is not invertible.
        \item For all $M\in \textnormal{Max}(A)$ we have $\pi_M(\langle B(A)\rangle_F)=F$, where $\pi_M: A\to A/M$ is the canonical projection.
        \item For every $F$-algebra $A$ the subspace spanned by the idempotents, $\langle B(A) \rangle_F$, is a Gelfand algebra.
    \end{enumerate}
    \label{prop: IdemOrt}
\end{prop}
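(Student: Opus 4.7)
The plan is to prove items (i)--(iv) in order, exploiting the Boolean algebra structure of $B(A)$. For (i), given a representation $a=\sum_{j=1}^{n}\mu_j e_j$ of an element of $\langle B(A)\rangle_F$, I would pass to the finite Boolean subalgebra generated by $\{e_1,\dots,e_n\}$; its atoms $f_{\vec\epsilon}=\prod_i e_i^{\epsilon_i}$ (with $e^1:=e$ and $e^0:=1-e$) are pairwise orthogonal (any two differ in some index $i$ and therefore share the factor $e_i(1-e_i)=0$), and the identity $1=\prod_i\bigl(e_i+(1-e_i)\bigr)=\sum_{\vec\epsilon}f_{\vec\epsilon}$ shows that each $e_i$ is the sum of the atoms with $\epsilon_i=1$. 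Substituting back into the expression for $a$ produces the required orthogonal representation.

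For (ii), I would start from such an orthogonal representation $a=\sum_{i=1}^{m}\lambda_i e_i$ (after dropping zero summands so that all $e_i\neq 0$) and set $e=\sum_i e_i\in B(A)$. The decomposition $a-\mu=\sum_i(\lambda_i-\mu)e_i-\mu(1-e)$ then reduces the invertibility analysis to a direct computation: the element $\sum_i(\lambda_i-\mu)^{-1}e_i-\mu^{-1}(1-e)$ (with the last summand omitted when $e=1$) is an explicit inverse precisely when $\mu\notin\{\lambda_1,\dots,\lambda_m\}$ and either $\mu\neq 0$ or $e=1$. Conversely, if $\mu=\lambda_{i_0}$ then $e_{i_0}(a-\mu)=0$ with $e_{i_0}\neq 0$ rules out invertibility, and if $e\neq 1$ then $a(1-e)=0$ forces $0\in\sigma(a)$.

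For (iii), the key point is that $A/M$ is a field, so the image under $\pi_M$ of any idempotent satisfies $x(x-1)=0$ and hence lies in $\{0,1\}\subset F\subset A/M$; consequently any $F$-linear combination of idempotents is sent by $\pi_M$ into $F$, and the surjection onto $F$ follows from $\pi_M(1)=1$. For (iv), commutativity of $A$ ensures $B(A)$ is closed under multiplication, so $B:=\langle B(A)\rangle_F$ is a unital subalgebra and a direct check gives $B(B)=B(A)$; applying (iii) to $B$ in place of $A$ yields $B/N=F$ for every maximal $N\subset B$, which is exactly the Gelfand property.

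The main obstacle should be (i): although the refinement to orthogonal atoms is conceptually simple, the bookkeeping with $2^n$ atoms and the verification that the substitution actually recovers $a$ must be done carefully. Items (ii)--(iv) then follow by the explicit inverse computation and the slick observation that idempotents in a field are trivial.
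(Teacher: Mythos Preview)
Your proof is correct. The paper's approach differs in a couple of places. For (i), the paper simply cites a reference rather than giving an argument, whereas you supply the standard Boolean-atom refinement. For (ii), the paper argues via the maximal-ideal description of the spectrum: given $M\in\textnormal{Max}(A)$, orthogonality of the $e_i$ forces at most one $e_i\notin M$, whence $a+M$ is either $0$ or some $\lambda_i$; this shows $\sigma(a)\subseteq\{0,\lambda_1,\dots,\lambda_n\}$ and simultaneously delivers (iii) and (iv) for free, since it exhibits $\pi_M(a)\in F$ directly. Your route via an explicit inverse is more elementary and in fact pins down both inclusions for $\sigma(a)$ cleanly (the paper's argument as written only gives one containment), at the cost of having to recover (iii) separately via the ``idempotents in a field are $0$ or $1$'' observation and then bootstrap (iv) from (iii). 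Both approaches are short; the paper's packages (ii)--(iv) into a single maximal-ideal computation, while yours keeps the parts independent and self-contained.
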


\begin{proof}
    \rom{1} is proven in \cite{Dominguez}. \rom{2} Note that if $a$ is an idempotent other than $0$ and $1$ then $\sigma(a)=\{0,1\}$. Since $a$ is neither $0$ nor $1$ we have $a(a-1)=0$ and $0,1\in \sigma(a)$. Now, if $\lambda\in\sigma (a)$ then $\lambda^2-\lambda\in\sigma (a^2-a)=\sigma(0)=\{0\}$ and $\lambda(\lambda-1)=0$. Given $a=\sum_{i=1}^n \lambda_i e_i$ with $e_i$ orthogonal idempotents and $\lambda_i\in F$, let $M\in \textnormal{Max}(A)$ if $a\in M$ then $0\in \sigma(a)$, if not, then there exists $e_i\not\in M$ and since $e_ie_j=0$ for $i\neq j$ then $e_j\in M$, and we have $a+M= \lambda_i e_i + M = \lambda_i +M$. \rom{3} and \rom{4} follow immediately from \rom{2}.
\end{proof}

\vspace{2mm}

The following Lemma generalizes (Prop 2, \cite{DominguezGomez1981}) to the setting of Gelfand algebras equipped with their canonical uniform structure.

\begin{lem}
    Let $A$ be a Gelfand algebra such that $\langle B(A) \rangle_F$ is uniformly dense, then every maximal ideal is determined by its idempotents.
    \label{lem : MaxDet}
\end{lem}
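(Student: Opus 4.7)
The plan is to prove the contrapositive: if $M,N\in\textnormal{Max}(A)$ satisfy $M\cap B(A)=N\cap B(A)$, then the canonical projections $\pi_M,\pi_N:A\to F$ agree on all of $A$, whence $M=\textnormal{ker}(\pi_M)=\textnormal{ker}(\pi_N)=N$. The argument has two movements: first show that the projections agree on the dense subspace $\langle B(A)\rangle_F$ because their values there are completely controlled by which idempotents lie in the ideal; then extend to all of $A$ by density and uniform continuity.

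For the first step, I would invoke Proposition~\ref{prop: IdemOrt}(i) to write any $b\in\langle B(A)\rangle_F$ as $b=\sum_{i=1}^n\lambda_ie_i$ with $e_ie_j=0$ for $i\neq j$ and $\lambda_i\in F$. Fixing any $P\in\textnormal{Max}(A)$, the relation $e_ie_j=0\in P$ together with primality of $P$ forces at most one index $i_0$ with $e_{i_0}\not\in P$; applying $\pi_P$ then gives $\pi_P(b)=\lambda_{i_0}$ when such $i_0$ exists and $\pi_P(b)=0$ otherwise (this is essentially the computation carried out in the proof of Proposition~\ref{prop: IdemOrt}(ii)). Hence $\pi_P(b)$ depends on $P$ only through the set $\{e_1,\dots,e_n\}\cap P$. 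Under the hypothesis $M\cap B(A)=N\cap B(A)$, the selected index $i_0$ is the same for both ideals, so $\pi_M(b)=\pi_N(b)$ for every $b\in\langle B(A)\rangle_F$.

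For the second step I would appeal to Theorem~\ref{thm : UnifCont}: both $\pi_M$ and $\pi_N$ are $F$-algebra morphisms from the Gelfand algebra $A$ into the (trivially Gelfand) algebra $F$, hence uniformly continuous with respect to the canonical uniformity. Given an arbitrary $a\in A$, uniform density of $\langle B(A)\rangle_F$ yields a net $(b_\lambda)\subset\langle B(A)\rangle_F$ converging to $a$; uniform continuity then forces $\pi_M(b_\lambda)\to\pi_M(a)$ and $\pi_N(b_\lambda)\to\pi_N(a)$ in $F$. Since $\pi_M(b_\lambda)=\pi_N(b_\lambda)$ by the previous paragraph and $F$ is Hausdorff, the two limits coincide, giving $\pi_M(a)=\pi_N(a)$ as required.

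The only delicate point — and the step I expect to need to spell out carefully — is the uniqueness of the ``outlier'' idempotent in an orthogonal decomposition modulo a maximal ideal; once this is observed, the rest is a textbook density/continuity closure argument, made possible precisely by the uniform continuity of algebra morphisms established in Theorem~\ref{thm : UnifCont}. No additional hypotheses on the topology of $F$ (beyond being a proper topological field, which guarantees the Hausdorff separation needed for the limit to be unique) are required.
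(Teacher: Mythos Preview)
Your argument is correct but follows a different route from the paper. The paper proves the stronger assertion $M=\overline{\langle B(A)\cap M\rangle}_F$ by a hands-on correction: given $x\in M$ and a neighborhood $U$, it first approximates $x$ by some $e=\sum\lambda_ie_i\in\langle B(A)\rangle_F$ (orthogonal decomposition), then observes that if $e\notin M$ the single ``bad'' coefficient $\lambda_1$ must lie in the small set $W$, so subtracting $\lambda_1e_1$ produces an element of $\langle B(A)\cap M\rangle_F$ still within $V_U$ of $x$. You instead prove only the implication $M\cap B(A)=N\cap B(A)\Rightarrow M=N$, by noting that $\pi_M$ and $\pi_N$ agree on $\langle B(A)\rangle_F$ (same orthogonality argument) and then invoking Theorem~\ref{thm : UnifCont} to get agreement on all of $A$ by density and Hausdorffness of $F$.

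What each buys: the paper's version is self-contained and gives an explicit reconstruction of $M$ from its idempotents, which is conceptually closer to the source result it generalizes. Your version is cleaner and leverages machinery already in place; it also makes transparent that the only analytic ingredient is continuity of the quotient maps. For the immediate application (Corollary~\ref{cor : ZarHaus}) your weaker conclusion is enough: $M\neq M'$ forces some idempotent $e$ to distinguish them, and replacing $e$ by $1-e$ if necessary one may take $e\in M\setminus M'$. Nothing is lost downstream.
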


\begin{proof}
    We will see that in the uniform topology $\overline{\langle B(A)\cap M \rangle}_F= M$, equivalently, for all $x\in M$ and  for all $U\in \mathcal{B}$ there exists $e\in \langle B(A)\cap M \rangle_F$ such that $(x,e)\in V_U$.

    Let $M\in \textnormal{Max}(A)$, $x\in M$ and $W,U\in\mathcal{B}$ such that $W+W\subset U$. Since $\langle B(A) \rangle_F$ is dense there exists $e\in \langle B(A) \rangle_F$ such that $(x,e)\in V_W$, i.e., $x-e + N\in W$ for all $N\in\textnormal{Max}(A)$. If $e\in M$ the proposition follows immediately, if $e\not\in M$ then by Proposition~\ref{prop: IdemOrt} $e=\sum_{i=1}^n\lambda_ie_i$ with $e_i\in B(A)$ orthogonal, by reorganizing we can assume $e_1\not\in M$ then $e_i\in M$ for $i\geq 2$, in particular, $e-\lambda_1e_1\in M$. This implies
    \[
    \lambda_1 =  \lambda_1e_1 + \sum_{i=2}^n \lambda_ie_i - x + M = e-x +M \in W.
    \]
    Note that for all $N\in \textnormal{Max}(A)$ we have
    \[
    x - (e-\lambda_1e_1) + N = (x-e) + \lambda_1 + N \in W+W \subset U
    \]
    then $(x,e-\lambda_1e_1)\in V_U$.
\end{proof}

\begin{cor}
    Let $A$ be a Gelfand algebra such that $\langle B(A)\rangle_F$ is uniformly dense then $(\textnormal{Max}(A),\uptau_Z)$ is Hausdorff.
    \label{cor : ZarHaus}
\end{cor}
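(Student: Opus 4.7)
The plan is to separate two distinct maximal ideals $M \neq N$ by Zariski-open sets coming from idempotents, using Lemma~\ref{lem : MaxDet} as the crucial input.

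First I would note that the lemma immediately implies that distinct maximal ideals differ on idempotents: if $B(A)\cap M = B(A)\cap N$, then $\langle B(A)\cap M\rangle_F = \langle B(A)\cap N\rangle_F$, and taking uniform closures gives $M=N$. So for $M\neq N$ we can pick, without loss of generality, an idempotent $e\in M\setminus N$.

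Next I would exploit the Boolean complement. Since $e(1-e)=0\in N$ and $N$ is prime with $e\notin N$, necessarily $1-e\in N$. On the other hand $1-e\notin M$, because otherwise $1=e+(1-e)\in M$. Thus $M\in D(1-e)$ and $N\in D(e)$. Using the identities recalled at the start of the section,
\[
D(e)\cap D(1-e) = D\bigl(e(1-e)\bigr) = D(0) = \emptyset,
\]
so $D(e)$ and $D(1-e)$ are disjoint Zariski-open neighborhoods separating $N$ and $M$, proving that $(\textnormal{Max}(A),\uptau_Z)$ is Hausdorff.

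There is no real obstacle here: once Lemma~\ref{lem : MaxDet} is in hand, everything reduces to the standard observation that idempotents yield complementary clopens in the Zariski topology. The only subtlety worth stating carefully is the primality step ensuring $1-e\in N$, which is where the maximality (hence primality) of $N$ is used.
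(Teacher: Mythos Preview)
Your proof is correct and follows essentially the same route as the paper: use Lemma~\ref{lem : MaxDet} to find an idempotent $e\in M\setminus N$, then separate $M$ and $N$ by the complementary Zariski opens $D(1-e)$ and $D(e)$. The primality argument you give for $1-e\in N$ is fine but superfluous; all that is actually needed is $e\notin N$, $1-e\notin M$, and $D(e)\cap D(1-e)=D(0)=\emptyset$.
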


\begin{proof}
    Let $M,M^\prime\in \textnormal{Max}(A)$ be different ideals, by Lemma~\ref{lem : MaxDet} there exists $e\in M\cap B(A)$ such that $e\not\in M^\prime$. Since $e(1-e)=0$ then $M\in D(1-e)$, $M^\prime\in D(e)$ and $D(e)\cap D(1-e)=\emptyset$.
\end{proof}

\vspace{2mm}

Recall a subset $A\subset F$ of a topological field is totally bounded if for all $U\in\mathcal{B}$ there exist $y_1,...,y_n\in K$   such that $A\subset \bigcup_{i=1}^n y_1 + U$.

\begin{prop}
    Let $A$ be a $F$-algebra. Let $x\in\overline{\langle B(A)\rangle_F}$ then $\sigma(x)$ is totally bounded.
    \label{prop : EspecBound}
\end{prop}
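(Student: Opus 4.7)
The plan is to show directly that $\sigma(x)$ admits a finite $U$-net for every symmetric neighborhood $U\in\mathcal{B}$ of zero. Given such $U$, the density hypothesis provides $a\in\langle B(A)\rangle_F$ with $\sigma(x-a)\subset U$, which is precisely the basic entourage condition $V_U$ defining the canonical uniformity on the Gelfand algebra $\langle B(A)\rangle_F$ (Proposition~\ref{prop: IdemOrt}~\rom{4}). By Proposition~\ref{prop: IdemOrt}~\rom{1}--\rom{2} I may assume $a=\sum_{i=1}^n \lambda_i e_i$ with the $e_i\in B(A)$ pairwise orthogonal, so that $\sigma(a)\subset\{\lambda_1,\dots,\lambda_n,0\}$ is a finite subset of $F$.

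Next I transfer the bound $\sigma(x-a)\subset U$ to $\sigma(x)$. Fix $\lambda\in\sigma(x)$; then $x-\lambda$ lies in some maximal ideal $M\subset A$, so under the canonical embedding $F\hookrightarrow A/M$ the projection $\pi_M$ sends $x$ to $\lambda$. Because the $e_i$ become orthogonal idempotents in the field $A/M$, at most one $\pi_M(e_i)$ equals $1$ and the others vanish; hence $\pi_M(a)\in\{\lambda_1,\dots,\lambda_n,0\}\subset F$. It follows that $\lambda-\pi_M(a)=\pi_M\bigl(x-a-(\lambda-\pi_M(a))\bigr)+\pi_M(x-a)$ lies in $F$ and, since $x-a-(\lambda-\pi_M(a))\in M$, belongs to $\sigma(x-a)\subset U$. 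Therefore $\lambda\in \pi_M(a)+U\subset \sigma(a)+U$. As $\lambda\in\sigma(x)$ was arbitrary, $\sigma(x)$ is covered by the finitely many translates $\{\mu + U : \mu\in\sigma(a)\cup\{0\}\}$, which is total boundedness.

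The main subtlety, and the one step I expect to require the most care in the write-up, is clarifying the meaning of $\overline{\langle B(A)\rangle_F}$: the canonical uniformity from Definition~\ref{def : nunif} is formulated for Gelfand algebras, and although $\langle B(A)\rangle_F$ itself is Gelfand by Proposition~\ref{prop: IdemOrt}~\rom{4}, the ambient algebra $A$ need not be. The natural reading, which the argument above uses, is that the closure is taken with respect to the entourages $V_U=\{(y,z):\sigma(y-z)\subset U\}$ extended to $A$, so that $x\in\overline{\langle B(A)\rangle_F}$ produces an $a\in\langle B(A)\rangle_F$ satisfying $\sigma(x-a)\subset U$ for each $U\in\mathcal{B}$. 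Everything else is then bookkeeping with orthogonal idempotents and the observation $\pi_M(a)\in\sigma(a)$, both handled by the preceding proposition.
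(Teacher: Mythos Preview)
Your argument is correct and matches the paper's proof: pick $a\in\langle B(A)\rangle_F$ with $(x,a)\in V_U$, use that $\sigma(a)$ is finite, and show each $\lambda\in\sigma(x)$ lies in $\mu+U$ for some $\mu\in\sigma(a)$. The only remark is that your displayed identity in the second paragraph is more circuitous than needed; once $\pi_M(x)=\lambda$ and $\pi_M(a)=\mu\in F$, one has directly $(x-a)-(\lambda-\mu)\in M$, hence $\lambda-\mu\in\sigma(x-a)\subset U$, which is exactly the step the paper states without further justification.
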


\begin{proof}
    Let $U\in\mathcal{B}$. Given $x\in \overline{\langle B(A) \rangle}_F$ there exists $e\in \langle B(A) \rangle_F$ such that $(x,e)\in V_U$, this implies that for all $\lambda \in \sigma(x)$ exists $\mu\in \sigma(e)$ such that $\lambda-\mu\in U$, from which follows $\sigma(x)\subset \bigcup_{\mu\in\sigma(e)}\mu+U$ and since $\sigma(e)$ is finite (Proposition~\ref{prop: IdemOrt} \rom{2}) we have that $\sigma(x)$ is totally bounded.
\end{proof}

\begin{cor}
    Let $F$ be a disconnected complete field and $A$ a Gelfand $F$-algebra such that $\langle B(A)\rangle_F$ is uniformly dense. Then $\uptau_Z=\uptau_G$ and $\textnormal{Max}(A)$ is a Stone space. 
    \label{cor : Z=G}
\end{cor}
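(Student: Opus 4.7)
The plan is to split the corollary into three steps: (a) every spectrum $\sigma(a)$ is compact in $F$; (b) $\uptau_Z=\uptau_G$; (c) $(\textnormal{Max}(A),\uptau_Z)$ is Stone. Step (a) is where completeness of $F$ and the density hypothesis combine, while (b) and (c) follow quickly from results already in the paper.

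For (a), I would first note that uniform density means $A=\overline{\langle B(A)\rangle_F}$, so Proposition~\ref{prop : EspecBound} applies to every $a\in A$ and yields that $\sigma(a)$ is totally bounded in $F$. Completeness of $F$ for its canonical group uniformity then guarantees that $\overline{\sigma(a)}$ is compact, being a closed totally bounded subset of a complete uniform space. Rather than trying to show $\sigma(a)$ is itself closed directly, I would bootstrap through $\textnormal{Max}(A)$: by Tychonoff, $K=\prod_{a\in A}\overline{\sigma(a)}$ is compact as a subspace of $F^A$; since $\textnormal{Max}(A)\subset \prod_{a}\sigma(a)\subset K$ and $\textnormal{Max}(A)$ is closed in $F^A$ by Lemma~\ref{lem: MAX closed}, we conclude that $(\textnormal{Max}(A),\uptau_G)$ is a closed subspace of a compact space, hence compact. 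Continuity of every $ev_a$ on $\uptau_G$ then promotes $\sigma(a)=ev_a(\textnormal{Max}(A))$ from merely totally bounded to genuinely compact.

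For (b), Corollary~\ref{cor : ZarHaus} already supplies that $\uptau_Z$ is Hausdorff, and with compactness of every spectrum now in hand Corollary~\ref{cor: espectC Z=G} immediately gives $\uptau_Z=\uptau_G$. For (c), the space is by now compact Hausdorff, so it is Stone as soon as points are separated by clopen sets. Given $M\neq M'$ in $\textnormal{Max}(A)$, Lemma~\ref{lem : MaxDet} produces an idempotent $e\in B(A)$ lying in exactly one of them; the Zariski open set $D(e)$ has complement $D(1-e)$ and is therefore clopen, separating $M$ and $M'$. This finishes the Stone property.

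The delicate point is (a): the naive attempt is to argue directly that $\sigma(a)$ is closed in $F$, but nothing in the hypotheses supplies this, and in general the evaluations need not be $\uptau_Z$-continuous. The trick is to reverse the usual order and obtain compactness of $\textnormal{Max}(A)$ first, by enclosing it in the compact product $\prod_a\overline{\sigma(a)}$ and exploiting the closedness already proved in Lemma~\ref{lem: MAX closed}; compactness of each $\sigma(a)$ then drops out as a continuous image, which is exactly what Corollary~\ref{cor: espectC Z=G} requires.
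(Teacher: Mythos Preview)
Your proof is correct and follows essentially the same route as the paper: total boundedness of spectra via Proposition~\ref{prop : EspecBound}, compactness of $\overline{\sigma(a)}$ by completeness of $F$, then Lemma~\ref{lem: MAX closed} together with Corollaries~\ref{cor : ZarHaus} and~\ref{cor: espectC Z=G}. Your explicit bootstrap in step~(a)---passing through compactness of $(\textnormal{Max}(A),\uptau_G)$ inside $\prod_a\overline{\sigma(a)}$ to upgrade each $\sigma(a)$ from totally bounded to compact---is exactly the reasoning the paper's terse citation of Lemma~\ref{lem: MAX closed} is meant to cover, and it is good that you spelled it out, since Corollary~\ref{cor: espectC Z=G} literally requires $\sigma(a)$ compact rather than $\overline{\sigma(a)}$ compact.
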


\begin{proof}
    By Proposition~\ref{prop : EspecBound} we have that $\sigma(a)$ is totally bounded for all $a\in A$, hence $\overline{\sigma(a)}$ is compact (\cite{Willard}, Theorem 39.9). The proposition now follows from Lemma~\ref{lem: MAX closed} and Corollaries~\ref{cor: espectC Z=G} and \ref{cor : ZarHaus}.
\end{proof}

\begin{prop}
    Let $F$ be a topological disconnected complete field and $A$ a Gelfand $F$-algebra such that $\langle B(A)\rangle_F$ is uniformly dense in $A$. Then $\{D(e): e\in B(A)\}$ is a basis for the Zariski topology in $\textnormal{Max}(A)$. 
    \label{prop: IdemBase}
\end{prop}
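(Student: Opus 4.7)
The plan is to show that for every $a\in A$ and every $M\in D(a)$ there exists an idempotent $f\in B(A)$ with $M\in D(f)\subseteq D(a)$. Since $\{D(a):a\in A\}$ is already a basis for $\uptau_Z$, this will prove the claim. The idea is to approximate $a$ by a linear combination $e' = \sum \lambda_i e_i$ of pairwise orthogonal idempotents (possible by Proposition~\ref{prop: IdemOrt} (i) together with the uniform density of $\langle B(A)\rangle_F$), and then assemble $f$ from only those $e_i$ whose scalar coefficient $\lambda_i$ is detectably nonzero.

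Set $\lambda := a + M \in F^{\times}$. I would first use Hausdorffness of $F$ and continuity of addition to pick a symmetric neighborhood $U\in\mathcal{B}$ of $0$ satisfying $\lambda \notin U+U$. By uniform density, choose $e' = \sum_{i=1}^n \lambda_i e_i$ with the $e_i$ orthogonal idempotents and all $\lambda_i\neq 0$, such that $(a,e')\in V_U$, i.e.\ $(a-e') + N \in U$ for every $N\in \textnormal{Max}(A)$. Define
\[
J := \{i : \lambda_i \notin U\}, \qquad f := \sum_{i\in J} e_i,
\]
which is an idempotent by the orthogonality of the $e_i$.

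To check $M\in D(f)$, I would apply Proposition~\ref{prop: IdemOrt} (ii) to $e'$: either $e'\in M$ or $e' + M = \lambda_{i_0}$ for a unique $i_0$ with $e_{i_0}\notin M$. The relation $\lambda - (e'+M)\in U$, combined with $\lambda\notin U+U$, excludes $e'+M \in U$ and in particular $e'+M=0$; hence $e'+M = \lambda_{i_0}$ with $\lambda_{i_0}\notin U$, so $i_0\in J$. By orthogonality and primality of $M$, $e_j\in M$ for all $j\neq i_0$, so $f + M = e_{i_0} + M \neq 0$, proving $M\in D(f)$. For $D(f)\subseteq D(a)$, let $N\in D(f)$; then the unique surviving idempotent $e_{i_1}$ modulo $N$ must have $i_1\in J$, giving $e'+N = \lambda_{i_1}\notin U$. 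Combined with $(a-e')+N \in U$ and $U = -U$, this prevents $a+N = 0$, so $N\in D(a)$.

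The only subtle step is the initial selection of $U$: one must ensure it is simultaneously symmetric, a member of the canonical base $\mathcal{B}$ (so that $V_U$ is a basic entourage of the uniformity of $A$), and small enough that $\lambda\notin U+U$. All of these are standard consequences of $F$ being a Hausdorff topological field. After that, the proof is really just careful bookkeeping on which of the finitely many orthogonal $e_i$ survives in each quotient, cleanly encoded by Proposition~\ref{prop: IdemOrt} (ii). Note that disconnectedness of $F$ is used only indirectly, through Corollary~\ref{cor : Z=G}, which guarantees that $\textnormal{Max}(A)$ is a Stone space and that each $D(e)$ for $e\in B(A)$ is clopen, consistent with a basis of a zero-dimensional space.
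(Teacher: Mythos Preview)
Your proof is correct and takes a genuinely different route from the paper's. The paper first invokes Corollary~\ref{cor : Z=G} to conclude that $(\textnormal{Max}(A),\uptau_Z)$ is a Stone space, and then argues at the level of clopens: given $M\in C$ with $C$ clopen, for each $N\in\textnormal{Max}(A)\setminus C$ it produces (via the separation of Corollary~\ref{cor : ZarHaus}) an idempotent $e_N\in M$ with $e_N\notin N$; compactness of $\textnormal{Max}(A)\setminus C$ yields finitely many $e_{N_1},\dots,e_{N_k}$, and $e=\prod_i(1-e_{N_i})$ satisfies $M\in D(e)\subset C$.

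Your argument is more direct and in fact more general. You work immediately with an arbitrary basic Zariski open $D(a)$, approximate $a$ by $e'=\sum\lambda_ie_i$, and threshold on $U$ to assemble $f$. Despite your closing remark, the body of your proof never actually uses Corollary~\ref{cor : Z=G}: neither compactness of $\textnormal{Max}(A)$ nor the Stone property enters, and the argument goes through verbatim for any Gelfand $F$-algebra with $\langle B(A)\rangle_F$ uniformly dense, over any (Hausdorff) topological field $F$, without assuming $F$ disconnected or complete. The paper's route genuinely needs those hypotheses to access the Stone structure and run the compactness step. What you pay is the bookkeeping with $U+U$ and the orthogonal decomposition; what you gain is a self-contained proof that bypasses the chain of earlier corollaries. (Incidentally, that each $D(e)$ with $e\in B(A)$ is clopen holds in any ring simply because $\textnormal{Max}(A)\setminus D(e)=D(1-e)$; no zero-dimensionality is needed for that observation.)
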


\begin{proof}
    First notice that $\uptau_Z=\uptau_G$ (Corollary~\ref{cor : Z=G}). Since $\textnormal{Max}(A)$ is a Stone space (Corollary~\ref{cor : Z=G}) it suffices to show that for every clopen $C\subset\textnormal{Max}(A)$ and $M\in C$ there exists $e\in B(A)$ such that $M\in D(e)\subset C$. Take $N\in \textnormal{Max}(A)\setminus C$, in Corollary~\ref{cor : ZarHaus} we proved that there exists $e_N\in B(A)$ such that $e_N\in M$ and $e_N\not\in N$, then $\{D(e_N): N\in \textnormal{Max}(A)\setminus C\}$ is an open cover of $\textnormal{Max}(A)\setminus C$ and since this set is compact there exist $e_{N_1},...,e_{N_k}$ such that $\textnormal{Max}(A)\setminus C \subset \bigcup_{i=1}^kD(e_{N_i})$. Since no $D(e_N)$ contains $M$ we have
    \[
    M \in D(\prod_{i=1}^k (1-e_{N_i})) = \bigcup_{i=1}^k \textnormal{Max}(A) \setminus D(e_{N_i}) \subset C.
    \]
\end{proof}

We are now ready to present our generalization of the Gelfand-Naimark theorem for the complete disconnected case.

\begin{thm}
Let $F$ be a topological disconnected field complete with respect to its canonical uniformity. A $F$-algebra $A$ is the algebra of $F$-valued continuous functions over a Stone space iff $A$ is Gelfand, semisimple, complete, and $\langle B(A) \rangle_F$ is uniformly dense in $A$.
\label{thm:GND}
\end{thm}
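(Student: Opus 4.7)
The forward implication is a bookkeeping exercise. For $X$ a Stone space, Lemma~\ref{lem : propCAlg} gives that $C(X,F)$ is Gelfand and semisimple, Corollary~\ref{cor : Ccomp} yields completeness, and Proposition~\ref{prop : CharD} provides the uniform density of $\langle B(C(X,F))\rangle_F$.

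For the converse, the plan is to show that the Gelfand map $I_F\colon A\to C(\textnormal{Max}(A),F)$ is a bijection, with $\textnormal{Max}(A)$ carrying its Gelfand topology. Corollary~\ref{cor : Z=G} identifies this topology with the Zariski one and makes the spectrum a Stone space; the compactness furnished there is what I need, since Theorem~\ref{thm: GelMap} supplies a uniformly continuous algebra morphism that is injective by semisimplicity, and the remark following Definition~\ref{def:GelMap} then upgrades it to a uniform embedding. Completeness of $A$ transfers to $I_F(A)$, making it a closed subspace of $C(\textnormal{Max}(A),F)$.

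Surjectivity will come from density. By Proposition~\ref{prop : CharD} applied to the Stone space $\textnormal{Max}(A)$, the span of the idempotents of $C(\textnormal{Max}(A),F)$ is dense, so it suffices to verify that every such idempotent sits in $I_F(A)$. An idempotent of $C(\textnormal{Max}(A),F)$ is the characteristic function $\chi_C$ of a clopen $C$; Proposition~\ref{prop: IdemBase} shows $C$ is a union of basic sets $D(e_\alpha)$ with $e_\alpha\in B(A)$, and compactness of $C$ extracts a finite subcover whose join $e=e_1\vee\cdots\vee e_n\in B(A)$ satisfies $D(e)=C$, so $I_F(e)=ev_e=\chi_C$. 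Hence $I_F(A)$ is closed and contains the dense subspace $\langle B(C(\textnormal{Max}(A),F))\rangle_F$, forcing equality.

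I expect the main obstacle to be the idempotent-matching step: pulling back density from $C(\textnormal{Max}(A),F)$ to the statement that $I_F$ has dense image hinges on Proposition~\ref{prop: IdemBase}, which in turn depends on the equality $\uptau_Z=\uptau_G$ and the Stone structure of the spectrum (Corollary~\ref{cor : Z=G}), so the full chain of preparatory results of Section~\ref{sec : 4} is what does the real work. The rest is routine transport of algebraic and uniform structure along $I_F$.
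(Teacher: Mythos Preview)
Your argument is correct and follows essentially the same route as the paper's proof: forward direction by collecting Lemma~\ref{lem : propCAlg}, Corollary~\ref{cor : Ccomp}, and Proposition~\ref{prop : CharD}; converse by using semisimplicity and completeness to make $I_F$ a closed uniform embedding (via Corollary~\ref{cor : Z=G} and the remark that $I_F$ is a uniform embedding when $(\textnormal{Max}(A),\uptau_G)$ is compact), then matching each clopen $C$ with an idempotent $e\in B(A)$ through Proposition~\ref{prop: IdemBase} so that $I_F(A)$ contains $\langle B(C(\textnormal{Max}(A),F))\rangle_F$ and hence is dense by Proposition~\ref{prop : CharD}. One small pointer: the uniform-embedding remark you invoke sits after Theorem~\ref{thm: GelMap}, not after Definition~\ref{def:GelMap}.
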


\begin{proof}
    We have seen that $C(X,F)$ is Gelfand, semisimple and complete, and that the characteristic functions of clopen sets are uniformly dense. Now, if $A$ is Gelfand and semisimple then $I_F:A\to C(\textnormal{Max}(A),F)$ is injective and its image is closed since $A$ is complete and, by the remark after Theorem~\ref{thm: GelMap}, $I_F$ is a uniform embedding. 
    
    Given a $C\subset \textnormal{Max}(A)$ clopen, by Proposition~\ref{prop: IdemBase}, there exist $e_1,...e_n\in B(A)$ such that
    \[
    D(\bigvee_{i=1}^ne_i) = \bigcup_{i=1}^nD(e_n) = C
    \]
    it follows that $I_F(e)=\chi_C$, where $e = \bigvee_{i=1}^ne_i$ and that $I_F(A)$ is dense in $C(\textnormal{Max}(A),F)$ (Proposition~\ref{prop : CharD}), therefore, $I_F(A)= C(\textnormal{Max}(A),F)$.
\end{proof}

\vspace{2mm}

\begin{cor}[\cite{Dominguez}, Van der Put] Let $F$ be a non-archimedean normed, complete field. A non-archimedean Banach $F$-algebra $(A,\norm{\cdot})$ is isometrically isomorphic to the algebra of continuous functions over a Stone space iff $\langle \{e\in B(A): \norm{e} = 1\} \rangle_F$ is dense in $A$. 
\label{thm : VDP} 
\end{cor}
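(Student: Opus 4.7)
My plan is to deduce the corollary from Theorem~\ref{thm:GND}, with the extra work concentrated on upgrading the resulting $F$-algebra isomorphism to an isometry. The forward direction is essentially free: in $C(X,F)$ every idempotent is a characteristic function $\chi_C$ of some clopen $C\subset X$, and $\|\chi_C\|_\infty=1$ whenever $C\ne\emptyset$, so the $F$-span in the statement coincides with $\langle B(C(X,F))\rangle_F$. By Proposition~\ref{prop : CharD} this is uniformly dense, and under the identification of the canonical uniformity with the sup-norm topology on $C(X,F)$ noted in Section~\ref{sec : 2} this is the same as norm density.

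For the converse, assume $\langle\{e\in B(A):\|e\|=1\}\rangle_F$ is norm-dense in $A$. Since the norm uniformity is finer than the canonical uniformity (remark preceding Definition~\ref{def:GelMap}), the same subspace is a fortiori uniformly dense, so $\langle B(A)\rangle_F$ is uniformly dense. The Gelfand property is verified as follows: every maximal ideal $M$ is norm-closed (the units of a Banach algebra form an open set and $M$ avoids them, so $\overline M$ remains a proper ideal), hence the quotient $\pi_M\colon A\to A/M$ is norm-continuous. By Proposition~\ref{prop: IdemOrt}(iii) the image of $\langle B(A)\rangle_F$ under $\pi_M$ lies in the one-dimensional $F$-subspace $F\cdot 1_{A/M}$, which is closed in $A/M$ since $F$ is complete and $\|1_{A/M}\|>0$. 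By continuity and density, $\pi_M(A)\subset F\cdot 1_{A/M}$, so $A/M=F$.

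The main obstacle is showing $\|a\|=\rho(a)$ for every $a\in A$, from which the rest cascades. I would first check it on the dense subspace: for $b=\sum_{i=1}^n\lambda_i e_i$ with pairwise orthogonal $e_i\in B(A)$ of norm one (one may adjoin $e_0=1-\sum_{i}e_i$ with coefficient $0$ to arrange $\sum_{i=0}^n e_i=1$), the non-archimedean triangle inequality gives $\|b\|\le\max_i|\lambda_i|$. Conversely, the ideal $J_i=\{a\in A:ae_i=0\}$ contains every $e_j$ with $j\ne i$ but not $e_i$, and extends to a maximal ideal $M_i$; the relation $1=e_i+\sum_{j\ne i}e_j$ prevents $e_i\in M_i$, so $\pi_{M_i}(b)=\lambda_i$, whence $|\lambda_i|\le\rho(b)$. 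Combined with $\rho(b)\le\|b\|$ (valid in any Banach $F$-algebra: if $|\lambda|>\|b\|$ then $1-b/\lambda$ is invertible by the geometric series, hence so is $b-\lambda$), this yields $\rho(b)=\|b\|$ on the dense subspace. Both $\|\cdot\|$ and $\rho$ are $1$-Lipschitz in the norm (with $\rho$ subadditive and dominated by $\|\cdot\|$), so the equality propagates to all of $A$ by density.

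Once $\|\cdot\|$ and $\rho$ coincide on $A$, the canonical and norm uniformities agree, so norm-completeness gives completeness for the canonical uniformity, and the Gelfand map $I_F$ is a linear isometry, hence injective: thus $\ker I_F=\textnormal{Jrad}(A)=\{0\}$ and $A$ is semisimple. All four hypotheses of Theorem~\ref{thm:GND} now hold (disconnectedness of $F$ is automatic in the non-archimedean case), so $I_F\colon A\to C(\textnormal{Max}(A),F)$ is a bijective $F$-algebra morphism; being an isometry, it realizes $A$ as isometrically isomorphic to $C(\textnormal{Max}(A),F)$ over the Stone space $\textnormal{Max}(A)$.
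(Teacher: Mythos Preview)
Your proof is correct and follows the same strategy as the paper: establish the Gelfand property via density of the idempotent span and closedness of $F$ in the quotient, prove $\rho=\|\cdot\|$ first on finite combinations of orthogonal norm-one idempotents and then extend by continuity, conclude that the canonical and metric uniformities agree so that semisimplicity and completeness transfer, and invoke Theorem~\ref{thm:GND}. The only minor differences are that the paper cites \cite{Dominguez} to justify that the orthogonal idempotents in the decomposition may be taken of norm one (which you assume without comment) and appeals directly to Proposition~\ref{prop: IdemOrt}\rom{2} for $\sigma(b)=\{\lambda_i\}$ rather than building the maximal ideals $M_i$ explicitly.
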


\begin{proof}
   One direction is trivial. On the other direction, it suffices to prove that under these assumptions $A$ is a Gelfand algebra and that the canonical uniformity and the metric uniformity coincide. Then we can apply Theorem~\ref{thm:GND}.

    Denote by $E= \langle \{e\in B(A): \norm{e} = 1\} \rangle_F$. Using the fact that $\pi_M(E) = F$, $\pi_M(E)$ is dense (since $E$ is) and $F$ is closed in $A/M$ (since $F$ is complete) we have that $A/M=F$ for all $M\in \textnormal{Max}(A)$. Now, by (\cite{Dominguez}, Theorem 1) $\sigma(a)$ is totally bounded, hence,  $(\textnormal{Max}(A),\uptau_G)$ is compact. 

    Finally, that the canonical uniformity and the metric uniformity coincide is equivalent to the Gelfand map being an isometry. Note that for $a\in A$ we have:
    \begin{align*}
    \norm{I_F(a)}_\infty &= \hspace{0,5mm} \sup\{|I_F(a)(M)|: M\in \textnormal{Max}(A)\} \\
    &= \hspace{0,5mm} \sup \{|\lambda|: \lambda\in \sigma(a)\} \\ 
    &= \rho(a)
\end{align*}
Take $e\in E$, by Proposition~\ref{prop: IdemOrt} we have $e = \sum_{i=1}^n \lambda_ie_i$ with $e_i\in I(A)$ ortogonal, we can also guarantee that they have norm $1$ (\cite{Dominguez}). Then
\[
\rho(e) \leq \norm{e} \leq \max\{\lambda_i\} = \rho(e)
\]
from which follows $\norm{e} = \rho(e)$. Given $a\in A$ by hypothesis there exists $(e_n)_n\subset E$ such that $e_n\to a$, since $I_F$ is continuous 
\[
\norm{I_F(a)}_\infty = \lim_{n\to\infty} \norm{I_F(e_n)}_\infty = \lim_{n\to\infty} \rho(e_n) = \lim_{n\to\infty} \norm{e_n} = \norm{a}
\]
\end{proof}

Notice that the only step that involves the completeness of the field in the proof is the surjectivity of the Gelfand map. So, if we try to extend the result to non-complete fields, for instance $\mathbb{Q}$, we do have that for any $F$-algebra $A$ that satisfies the conditions of Theorem~\ref{thm:GND}, except maybe for the completeness,  $I_F$ is epi, since its image is (uniformly) dense, but this is not a sufficient condition for $I_F$ to be surjective.



\section{General Gelfand Adjunction}

\label{sec : 3}

In this section we give an explicit proof of Gelfand duality, showing in the process that there is a (dual) adjunction between $\textsf{KH}_F$ and the category of $F$-algebras satisfying \rom{1},\rom{2} and \rom{4} from Lemma~\ref{lem : propCAlg}, which extends Gelfand duality. Also we show that if the field is complete and satisfies the Stone-Weierstrass theorem, conditions in Lemma~\ref{lem : propCAlg} plus completeness characterize the algebras of continuous functions. 

In Lemma~\ref{lem : propCAlg} we proved that $C(X,F)$ for a compact $F$-Tychonoff space $X$ has the following properties: \rom{1} Gelfand, \rom{2} semisimple, \rom{3} the spectrum of each element is compact and \rom{4} every prime ideal is contained in a unique maximal ideal. All of these properties appear explicitly throughout, except maybe for \rom{4} which plays a more subtle role.

Recall from Section~\ref{sec : 1} that the maximal spectrum is functorial over the category of Gelfand algebras. Unfortunately, the Gelfand property does not guarantee the Hausdorffness of $\uptau_Z$, this is where property \rom{4} plays a role since the $pm$-property implies that the maximal spectrum is Hausdorff. In fact, if the algebra is semisimple then the maximal spectrum is Hausdorff if and only if the algebra is a $pm$-ring (\cite{DominguezGomez1981}, Prop 1). 

If in addition \rom{3} holds then, the compactness of the spectrum of each element implies $\uptau_G$ is compact (Proposition~\ref{prop: MaxCom}) and the $pm$-property implies $\uptau_Z$ is Hausdorff, therefore, $\uptau_G=\uptau_Z$. Now, it follows from Lemma~\ref{lem: Z=G} that $\textsf{M}_FA \in \textsf{KH}_F$. 

Denote by $\textsf{A}_F$ the full subcategory of $\textsf{Alg}_F$ of algebras satisfying \rom{1}, \rom{3} and \rom{4}. Under these assumptions we have that the Gelfand transform and the Gelfand map define natural transformations between $G_F: \textsf{1}_{\textsf{KH}_F}\to \textsf{M}_F\textsf{C}_F$ and 
$I_F : \textsf{1}_{A}\to \textsf{C}_F\textsf{M}_F$, respectively. 

\[
\begin{matrix}
   G_X = G_F(X): & X & \to & \textnormal{Max}(C(X,F)) & & I_A = I_F(A) : & A & \to & C(\textnormal{Max}(A),F) \\
    & x & \mapsto & M_x & & & a & \mapsto & ev_a 
\end{matrix}
\]

 \noindent Recall that for $f:X\to Y$ and $\varphi:A\to B$ one has $\textsf{C}_Ff = f^* = -\circ f$ and $\textsf{M}_F\varphi = \varphi^{-1}$. The commutativity of the diagrams:

\begin{center}
    \begin{tikzpicture}
            \matrix (m) [matrix of math nodes, name = m, row sep = 1.5cm, column sep = 1cm]
            {
            X & Y & & A & B \\
        \textsf{M}_F\textsf{C}_F(X) & \textsf{M}_F\textsf{C}_F(Y) & & \textsf{M}_F\textsf{C}_F(A) & \textsf{M}_F\textsf{C}_F(B) \\
            };
            \path[-stealth]
            (m-1-1) edge node[above]{$f$} (m-1-2)
            (m-1-1) edge node[left]{$G_X$} (m-2-1)
            (m-1-2) edge node[right]{$G_Y$} (m-2-2)
            (m-2-1) edge node[below]{$(f^*)^{-1}$} (m-2-2)
            (m-1-4) edge node[above]{$\varphi$} (m-1-5)
            (m-1-4) edge node[left]{$I_A$} (m-2-4)
            (m-2-4) edge node[below]{$-\circ\varphi^{-1}$} (m-2-5)
            (m-1-5) edge node[right]{$I_B$} (m-2-5)
            ;   
    \end{tikzpicture}
\end{center}

\noindent follows from 

\[
G_Y\circ f(x) = M_{f(x)} = \{h\in C(Y,F): h(f(x)) = 0\} = \textnormal{ker}(ev_x\circ f^*) = (f^*)^{-1}\circ G_X(x) \\
\]
\noindent and
\[
ev_a\circ \varphi^{-1}(M) = ev_a(\textnormal{ker}(\pi_M\circ\varphi)) = \pi_M(\varphi(a)) = ev_{\varphi(a)}(M) 
\]
\noindent respectively.

\begin{prop}
    There is a dual adjunction between the category $\textsf{KH}_F$ and the category \textsf{A} of $F$-algebras satisfying \rom{1}, \rom{3} and \rom{4} of Lemma~\ref{lem : propCAlg}. 

    \label{thm : GA}
\end{prop}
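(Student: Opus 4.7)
The plan is to exhibit the dual adjunction by declaring the unit to be $G_F$ and the counit to be $I_F$, and then verifying the two triangle identities directly. Well-definedness of the functors and naturality of the two transformations have essentially been established in the preceding discussion, so the argument reduces to a pair of pointwise computations.

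First I would confirm that the two functors land in the stated categories. That $\textsf{C}_F(X)=C(X,F)\in \textsf{A}_F$ for $X\in\textsf{KH}_F$ follows from Lemma~\ref{lem : propCAlg}. For $\textsf{M}_F(A)\in\textsf{KH}_F$ with $A\in\textsf{A}_F$, property \rom{3} combined with Proposition~\ref{prop: MaxCom} makes $(\textnormal{Max}(A),\uptau_G)$ compact, property \rom{4} makes $(\textnormal{Max}(A),\uptau_Z)$ Hausdorff via the $pm$-ring remark, and since $\uptau_Z\leq \uptau_G$ with both compact Hausdorff they coincide; Lemma~\ref{lem: Z=G} then shows that the evaluation maps $ev_a$ witness $F$-Tychonoff separation. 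Naturality of $G_F$ and $I_F$ is precisely the content of the two commutative squares displayed just above the proposition.

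The core step is verifying the two triangle identities. For $X\in\textsf{KH}_F$, $f\in C(X,F)$ and $x\in X$, the Gelfand identification $C(X,F)/M_x=F$ via evaluation at $x$ gives
\[
\bigl(\textsf{C}_F(G_X)\circ I_{\textsf{C}_F X}\bigr)(f)(x) = (ev_f\circ G_X)(x) = ev_f(M_x) = f+M_x = f(x).
\]
For $A\in\textsf{A}_F$ and $M\in\textnormal{Max}(A)$, let $\tilde M = \{h\in C(\textnormal{Max}(A),F): h(M)=0\}$ denote the corresponding maximal ideal of $C(\textnormal{Max}(A),F)$; the Gelfand property of $A$ then yields
\[
\bigl(\textsf{M}_F(I_A)\circ G_{\textsf{M}_F A}\bigr)(M) = I_A^{-1}(\tilde M) = \{a\in A: ev_a(M)=0\} = \{a\in A: a\in M\} = M.
\]

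The only real obstacle is notational: keeping straight the direction of morphisms in the contravariant setting and, in particular, remembering that in a dual adjunction both unit and counit point \emph{away} from the identity functors. Once the triangle identities above are in hand, standard category theory produces the natural bijection
\[
\textnormal{Hom}_{\textsf{KH}_F}(X,\textsf{M}_F A)\ \cong\ \textnormal{Hom}_{\textsf{A}_F}(A,\textsf{C}_F X),\qquad f\mapsto f^*\circ I_A,\ \ \varphi\mapsto \varphi^{-1}\circ G_X,
\]
establishing the claimed dual adjunction.
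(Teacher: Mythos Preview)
Your proposal is correct and follows essentially the same route as the paper: after recalling that the functors land in the right categories and that $G_F$, $I_F$ are natural (the ``discussion above'' in the paper), you verify the two triangle identities by the same pointwise computations, namely $ev_f(G_X(x))=ev_f(M_x)=f(x)$ and $I_A^{-1}(M_N)=N$. The only addition is that you spell out the resulting hom-set bijection explicitly, which the paper leaves implicit.
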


\begin{proof}
By discussion above it only remains to show that the following diagrams commute (for simplicity we omit the subscript $F$)
    \begin{center}
    \begin{tikzpicture}
        \matrix (m) [matrix of math nodes, name=m ,row sep=10mm,column sep=10mm,minimum width=1.5mm]
        {
        \textsf{C} & \textsf{C}\textsf{M}\textsf{C} & & \textsf{M} & \textsf{M}\textsf{C}\textsf{M} \\
        & \textsf{C} & & & \textsf{M} \\
        };
        \path[-stealth]
        (m-1-1) edge node[above]{{\footnotesize $I$}\textsf{C}} (m-1-2)
        (m-1-2) edge node[right]{\textsf{C}{\footnotesize $G$}} (m-2-2)
        (m-1-4) edge node[above]{{\footnotesize $G$}\textsf{M}} (m-1-5)
        (m-1-5) edge node[right]{\textsf{M}{\footnotesize $I$}} (m-2-5)
        ;
        \path[]
        (m-1-1) edge [double] node {} (m-2-2)
        (m-1-4) edge [double] node {} (m-2-5)
        ;
    \end{tikzpicture}
\end{center}
For the diagram on the left, let $X$ be a $F$-Tychonoff space and notice that 
\[
ev_f\circ G(x) = ev_f(M_x) = f(x)
\]
 for the diagram on the right, let $A\in \textsf{A}$ and $N\in \textnormal{Max}(A)$, then $a\in I^{-1}(M_N)$ iff $ev_a\in M_N$ iff $ev_a(N) = 0$ iff $a\in N$, here $M_N\subset C(\textnormal{Max}(A),F)$ is the maximal ideal of functions that vanish at $N$. 
\end{proof}

\vspace{2mm}

We can restrict the adjunction of the previous theorem to the category $\textsf{CAlg}_F$. Turns out that this restriction gives us the general version of Gelfand duality. 

\begin{thm}[\cite{Vechtomov}, Proposition 8.2]
    The category $\textsf{KH}_F$ is dual to the category $\textsf{CAlg}_F$.

    \label{thm : GD}
\end{thm}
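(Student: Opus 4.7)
The plan is to specialize the dual adjunction from Proposition~\ref{thm : GA} to the subcategories $\textsf{KH}_F$ and $\textsf{CAlg}_F$, and then verify that the restricted unit $G_F$ and counit $I_F$ become natural isomorphisms, thereby upgrading the adjunction to a duality.

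First, I would check that the restriction is well-posed. By parts (i), (iii), (iv) of Lemma~\ref{lem : propCAlg}, every algebra of the form $C(X,F)$ with $X \in \textsf{KH}_F$ lies in $\textsf{A}_F$, so $\textsf{CAlg}_F \subset \textsf{A}_F$; in particular $\textsf{M}_F$ restricts to a functor $\textsf{CAlg}_F \to \textsf{KH}_F$, and by the very definition of $\textsf{CAlg}_F$ as the image of $\textsf{C}_F$, the functor $\textsf{C}_F$ lands in $\textsf{CAlg}_F$. The natural transformations $G_F$ and $I_F$ restrict along these inclusions.

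For the unit $G_X : X \to \textnormal{Max}(C(X,F))$ with $X \in \textsf{KH}_F$, I would invoke Theorem~\ref{thm: TGK}, which says precisely that $G_X$ is a homeomorphism under this hypothesis. For the counit $I_A : A \to C(\textnormal{Max}(A),F)$ with $A \in \textsf{CAlg}_F$, I would exploit the definition of $\textsf{CAlg}_F$: there exists $X \in \textsf{KH}_F$ together with an algebra isomorphism $\phi : A \to \textsf{C}_F(X)$. The triangular identity $\textsf{C}_F(G_X) \circ I_{\textsf{C}_F(X)} = \mathrm{id}_{\textsf{C}_F(X)}$ proved at the end of Proposition~\ref{thm : GA}, combined with the fact that $G_X$ (hence $\textsf{C}_F(G_X)$) is an isomorphism, forces $I_{\textsf{C}_F(X)}$ to be an isomorphism as well. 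Naturality of $I_F$ with respect to $\phi$ then transports this across $\phi$ and shows $I_A$ itself is an isomorphism.

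I do not expect a serious obstacle here, since essentially all of the work has been done in earlier sections and what remains is bookkeeping on top of Proposition~\ref{thm : GA} and Theorem~\ref{thm: TGK}. The only subtle point worth highlighting is the containment $\textsf{CAlg}_F \subset \textsf{A}_F$, which crucially relies on property (iv) of Lemma~\ref{lem : propCAlg}---the $pm$-property---without which $\uptau_Z$ would fail to be Hausdorff on $\textnormal{Max}(A)$ and the earlier general adjunction would not even apply to objects of $\textsf{CAlg}_F$.
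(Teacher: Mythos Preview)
Your proposal is correct and follows essentially the same approach as the paper: both arguments reduce to showing that $I_A$ is an isomorphism by transporting the problem across a chosen isomorphism $\phi\colon A\to C(X,F)$ and using that $G_X$ is a homeomorphism. The only difference is packaging---you invoke the triangle identity $\textsf{C}_F(G_X)\circ I_{\textsf{C}_F(X)}=\mathrm{id}$ from Proposition~\ref{thm : GA} together with naturality of $I_F$, whereas the paper writes out the resulting map $\beta=\textsf{C}_F(G_X^{-1}\circ\alpha'')$ explicitly and verifies $\beta\circ\alpha=I_A$ by direct computation.
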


\begin{proof}
    Since any algebra $A$ in $\textsf{CAlg}_F$ is semisimple we just have to show that the Gelfand map $I_A : A\to C(\textnormal{Max}(A),F)$ is surjective. By definition there is an isomorphism $\alpha: A\to C(X,F)$ for some $X$ in $\textsf{KH}_F$, we will see that there is an isomorphism $\beta : C(X,F) \to C(\textnormal{Max}(A),F)$ such that the diagram commutes
    \begin{center}
        \begin{tikzpicture}
            \matrix (m) [matrix of math nodes, name=m ,row sep=10mm,column sep=10mm,minimum width=1.5mm]
            {
            A & \\
            C(X,F) & C(\textnormal{Max}(A),F) \\
            };
            \path[-stealth]
            (m-1-1) edge node[above]{$I_A$} (m-2-2)
            (m-1-1) edge node[left]{$\alpha$} (m-2-1)
            (m-2-1) edge node[below]{$\beta$} (m-2-2)
            ;
        \end{tikzpicture}
    \end{center}
    \noindent Consider the map $\textnormal{Max}(A)\overset{\alpha^{\prime\prime}}{\to} \textnormal{Max}(C(X,F)) \overset{G_X^{-1}}{\to} X$, where $\alpha^{\prime\prime}$ denotes the direct image. Since $\textsf{M}_F \alpha = \alpha^{-1}$ we have that $(\textsf{M}_F \alpha)^{-1} = \alpha^{\prime\prime}$. 
    
    Take $\beta = \textsf{C}_F(G_X^{-1}\circ\alpha^{\prime\prime}) = -\circ(G_X^{-1}\circ\alpha^{\prime\prime})$. Now, given $a\in A$ and $M\in\textnormal{Max}(A)$ we have
    \begin{align*}
        \alpha(a)\circ(G_X^{-1}\circ\alpha^{\prime\prime})(M) &= \alpha(a)(G_X^{-1}(\alpha(M))) \\
        &= \alpha(a) + \alpha(M)
    \end{align*}
    \noindent since $\alpha$ preserves the unity, and thus it preserves the field, we have that if $\lambda - a \in M$ (therefore $\lambda = a+ M$), then $\alpha(a)-\lambda \in \alpha(M)$, from which follows:
    \[
    ev_a(M) = \lambda =   \alpha(a)\circ(G_X^{-1}\circ\alpha^{\prime\prime})(M)
    \]
\end{proof} 

\vspace{2mm}

Classically Gelfand duality involves the norm of the algebras, in Section~\ref{sec : 2} we showed that the uniform structure induced by the norm is in fact an algebraic invariant of Gelfand algebras. Since objects of $\textsf{CAlg}_F$ (or $\textsf{A}_F$) can also be regarded as uniform spaces and, by Theorem~\ref{thm : UnifCont}, this does not alter the category we have the following corollary. 

\begin{cor}
    There is a dual adjunction between the category $\textsf{KH}_F$ and the category $\textsf{A}_F$ of $F$-algebras satisfying \rom{1}, \rom{3} and \rom{4} of Lemma~\ref{lem : propCAlg} endowed with the canonical uniformity. Furthermore, if we restrict ourselves to $\textsf{CAlg}_F$ the adjunction restricts to a duality. 
\end{cor}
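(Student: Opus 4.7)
The plan is to show that equipping the objects of $\textsf{A}_F$ with their canonical uniformity does not alter the category in any essential way, so that the adjunction of Proposition~\ref{thm : GA} and the duality of Theorem~\ref{thm : GD} transfer verbatim. The key observation driving everything is Theorem~\ref{thm : UnifCont}: every $F$-algebra morphism between Gelfand algebras is automatically uniformly continuous with respect to the canonical uniformities on source and target. Since $\textsf{A}_F\subset\mathcal{G}\textsf{Alg}_F$, the "forgetful" functor from the category of algebras in $\textsf{A}_F$ equipped with their canonical uniformity (and uniformly continuous $F$-algebra morphisms) to the bare category $\textsf{A}_F$ is an isomorphism of categories: it is bijective on objects by construction and bijective on hom-sets by Theorem~\ref{thm : UnifCont}.

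First I would verify that the functor $\textsf{C}_F$ lands in the enriched category: given $X\in\textsf{KH}_F$, the algebra $C(X,F)$ satisfies \rom{1}, \rom{3}, \rom{4} of Lemma~\ref{lem : propCAlg}, and its canonical uniformity is precisely the uniform-convergence uniformity, which coincides with the compact-open topology (as discussed in Section~\ref{sec : 2}). Second, the functor $\textsf{M}_F$ requires no modification: it sends objects of (enriched) $\textsf{A}_F$ to objects of $\textsf{KH}_F$ exactly as in the discussion preceding Proposition~\ref{thm : GA}, since the $pm$-property plus compactness of each $\sigma(a)$ plus the Gelfand property force $\uptau_Z=\uptau_G$ and $F$-Tychonoff.

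Third, I would note that the natural transformations $G_F:\textsf{1}_{\textsf{KH}_F}\to\textsf{M}_F\textsf{C}_F$ and $I_F:\textsf{1}_{\textsf{A}_F}\to\textsf{C}_F\textsf{M}_F$ from Proposition~\ref{thm : GA} require no redefinition, and the triangle identities already verified there continue to hold, because both the objects and the morphisms of the enriched category coincide with those of $\textsf{A}_F$. Hence the dual adjunction is immediate from Proposition~\ref{thm : GA}. For the restriction to $\textsf{CAlg}_F$, the same reasoning applies, and the fact that $G_F$ and $I_F$ become isomorphisms there is precisely the content of Theorem~\ref{thm : GD}; so the adjunction upgrades to a duality.

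There is no real obstacle to overcome, since the substance of the statement is already contained in Theorem~\ref{thm : UnifCont} together with the results of Section~\ref{sec : 3}. The only mild subtlety is of an expository kind: one should make explicit that "endowed with the canonical uniformity" is to be read as a \emph{structural enrichment} of the objects, with morphisms still taken to be $F$-algebra maps (now interpreted as uniformly continuous maps of uniform $F$-algebras), and verify that this interpretation is coherent, i.e.\ that there is no clash between the two possible choices of hom-sets. Theorem~\ref{thm : UnifCont} settles precisely this coherence, and so the corollary follows at once.
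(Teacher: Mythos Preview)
Your proposal is correct and follows exactly the paper's approach: the paper simply observes, in the paragraph preceding the corollary, that by Theorem~\ref{thm : UnifCont} regarding the objects of $\textsf{A}_F$ (or $\textsf{CAlg}_F$) as uniform spaces does not alter the category, so the adjunction of Proposition~\ref{thm : GA} and the duality of Theorem~\ref{thm : GD} carry over unchanged. Your write-up is more detailed than the paper's one-line justification, but the substance is identical.
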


By the Gelfand-Kolmogorov theorem $G_F$ is a natural isomorphism (Theorem~\ref{thm: TGK}), therefore, $\textsf{KH}_F$ is actually a retract of $\textsf{A}_F$. This shows that we are not that far from getting a duality, but \emph{how far are we from it?}

Notice that the only thing separating the (dual) adjunction from the duality is the Gelfand map. By Theorem~\ref{thm: GelMap}  $I_F$ is injective iff the algebra is semisimple, so $I_F$ being monomorphic is completely characterized. 

The surjectivity of $I_F$, in the general case, is an open problem. Classically, this is achieved by observing first that, under the suppositions over the norm plus completeness, $I_F$ is an isometry, therefore its image is closed, and then, using the Stone-Weierstrass theorem, show that its image is also dense. Combining these two facts we get that $I_F$ is surjective. 

A topological field $F$, different from $\mathbb{C}$, is said to satisfy the Stone-Weierstrass theorem if, for every $X$ compact Hausdorff, any subalgebra $A\subset C(X,F)$ that contains the constant functions and separates points is uniformly dense (canonical uniformity)\footnote{Recall that in this case, the uniform topology coincides with the compact-open topology.}.  In the complex case, we additionally require the subalgebra $A$ be closed under conjugation. 

Let $F$ be a complete field that satisfies the Stone-Weierstrass theorem and let $A$ be an $F$-algebra satisfying conditions \rom{1}-\rom{4} and complete (w.r.t. the canonical uniformity). By the remark after Theorem~\ref{thm: GelMap} the semisimplicity and the completeness imply that the image of $I_F$ is closed, moreover, by Lemma~\ref{lem: Z=G}  $I_F(A)$ separates points, therefore, $I_F$ is surjective. From the previous discussion it follows:

\begin{thm}
    Let $F$ be a complete field satisfying the Stone-Weierstrass theorem. The category $\textsf{KH}_F$ is dual to the category of complete (w.r.t. the canonical uniformity) $F$-algebras satisfying \rom{1}-\rom{4} of Lemma~\ref{lem : propCAlg}.
    \label{thm : SWGD}
\end{thm}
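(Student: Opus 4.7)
The plan is to reduce to the dual adjunction established in the preceding Corollary, in which $G$ is already a natural isomorphism by Theorem~\ref{thm: TGK}. It therefore suffices to show that, under the stated hypotheses on $A$, the counit $I_A : A \to C(\textnormal{Max}(A),F)$ is an isomorphism of uniform $F$-algebras; naturality is already built into the adjunction.

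Injectivity is immediate: semisimplicity (property \rom{2}) together with Theorem~\ref{thm: GelMap} gives $\textnormal{ker}(I_A) = \textnormal{Jrad}(A) = 0$. Moreover, the discussion preceding Proposition~\ref{thm : GA}, combining properties \rom{1}, \rom{3}, \rom{4} with Proposition~\ref{prop: MaxCom} and Lemma~\ref{lem: Z=G}, shows $(\textnormal{Max}(A),\uptau_Z)=(\textnormal{Max}(A),\uptau_G)$ is a compact Hausdorff space lying in $\textsf{KH}_F$. By the remark after Theorem~\ref{thm: GelMap}, this compactness upgrades $I_A$ from a uniformly continuous monomorphism to a uniform embedding.

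For surjectivity I would show that $I_A(A)$ is both closed and uniformly dense in $C(\textnormal{Max}(A),F)$. Closedness follows because $I_A$ is a uniform embedding with complete domain, so its image is a complete, hence closed, subspace of $C(\textnormal{Max}(A),F)$ (which is itself complete by Corollary~\ref{cor : Ccomp}). For density, observe that $I_A(A)$ contains the constants (since $I_A(\lambda\cdot 1)$ is the constant function $\lambda$) and separates the points of $\textnormal{Max}(A)$ (given $M\neq N$, pick $a\in M\setminus N$ so that $ev_a(M)=0\neq ev_a(N)$); the Stone-Weierstrass hypothesis on $F$ then yields density of $I_A(A)$ in the uniform topology of $C(\textnormal{Max}(A),F)$, which by Section~\ref{sec : 2} coincides with the compact-open topology. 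Closed plus dense gives $I_A(A) = C(\textnormal{Max}(A),F)$.

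The main obstacle is essentially bookkeeping rather than mathematics: one must verify that the three uniformities in play—the canonical uniformity on $C(\textnormal{Max}(A),F)$, the subspace uniformity transported through the embedding $I_A$, and the uniformity of convergence in which the Stone-Weierstrass hypothesis is formulated—all coincide on the compact space $\textnormal{Max}(A)$. Given the identifications already carried out in Section~\ref{sec : 2}, this step is routine, and once it is in hand the closed-plus-dense argument completes the proof.
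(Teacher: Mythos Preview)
Your proof is correct and follows essentially the same route as the paper: reduce to the established dual adjunction, note that $G$ is already a natural isomorphism, and then show $I_A$ is an isomorphism by combining injectivity (semisimplicity), closedness of the image (uniform embedding plus completeness of $A$), and density of the image (Stone--Weierstrass applied to the point-separating subalgebra $I_A(A)$). The paper invokes Lemma~\ref{lem: Z=G} for the point-separation step where you give the direct argument with $a\in M\setminus N$, but otherwise the arguments coincide.
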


Restating the previous result, we get the following characterization of algebras of continuous functions for these fields. 

\begin{cor}
Let $F$ be a complete field satisfying the Stone-Weierstrass theorem. An $F$-algebra $A$ is the algebra of $F$-valued continuous functions over a compact $F$-Tychonoff space iff $A$ is
\begin{enumerate}[label = \enum]
    \item  Gelfand.
    \item semisimple.
    \item the spectrum of every element is compact.
    \item every prime ideal is contained in a unique maximal ideal. 
    \item complete (w.r.t. the canonical uniformity).
\end{enumerate}
    \label{cor : ContAlgWS}
\end{cor}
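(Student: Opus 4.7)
The plan is to derive the corollary directly from Theorem~\ref{thm : SWGD} by translating the categorical duality into a pointwise characterization of objects. The forward direction is settled by the results already collected: if $A = C(X,F)$ for some compact $F$-Tychonoff space $X$, then properties \rom{1}--\rom{4} follow immediately from Lemma~\ref{lem : propCAlg}, while property \rom{5} is exactly Corollary~\ref{cor : Ccomp}, which uses the completeness of $F$ to deduce that $C(X,F)$ is complete in the canonical uniformity (identified with uniform convergence on $X$).

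For the converse, I would show that the Gelfand map $I_F: A \to C(\textnormal{Max}(A),F)$ is an isomorphism of $F$-algebras, where $\textnormal{Max}(A)$ carries the Zariski topology. First, property \rom{1} ensures that $\textsf{M}_F$ is defined and that evaluations $ev_a$ are well-defined elements of $F^{\textnormal{Max}(A)}$. Properties \rom{3} and \rom{4}, combined with Proposition~\ref{prop: MaxCom}, Corollary~\ref{cor: espectC Z=G}, and (\cite{DominguezGomez1981}, Prop 1), imply that $\uptau_Z = \uptau_G$ and $\textnormal{Max}(A)\in\textsf{KH}_F$, so that $I_F$ does land in $C(\textnormal{Max}(A),F)$. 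Property \rom{2} (semisimplicity) and Theorem~\ref{thm: GelMap} give injectivity of $I_F$, and the remark following Theorem~\ref{thm: GelMap} upgrades this to a uniform embedding (since $\uptau_G$ is compact). Property \rom{5} then says that $A$ is complete, so its image $I_F(A)$ is a complete, hence closed, subalgebra of $C(\textnormal{Max}(A),F)$.

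The final step is density of $I_F(A)$, which is where the Stone--Weierstrass hypothesis enters: $I_F(A)$ contains the constant functions (because $I_F$ is an $F$-algebra morphism sending $1$ to $1$) and separates points of $\textnormal{Max}(A)$, since for distinct $M,N$ any $a\in M\setminus N$ yields $ev_a(M) = 0 \neq ev_a(N)$ (equivalently, this is the ``$\Leftarrow$'' direction of Lemma~\ref{lem: Z=G}). By the Stone--Weierstrass hypothesis, $I_F(A)$ is uniformly dense in $C(\textnormal{Max}(A),F)$, and combined with closedness this gives surjectivity, so $A \cong C(\textnormal{Max}(A),F)$ with $\textnormal{Max}(A)$ compact $F$-Tychonoff. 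The only subtlety to handle carefully is the complex case, where one should observe that $I_F(A)$ is automatically closed under conjugation so the usual form of Stone--Weierstrass applies; in every other case, the hypothesized version applies directly, and the argument is essentially a repackaging of the proof of Theorem~\ref{thm : SWGD}.
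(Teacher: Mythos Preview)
Your proposal is correct and follows essentially the same route as the paper: the corollary is presented there as a direct restatement of Theorem~\ref{thm : SWGD}, whose proof (in the discussion preceding it) uses exactly the ingredients you list---Lemma~\ref{lem : propCAlg} and Corollary~\ref{cor : Ccomp} for the forward direction, and for the converse the chain \rom{3}+\rom{4} $\Rightarrow$ $\uptau_Z=\uptau_G$ and $\textnormal{Max}(A)\in\textsf{KH}_F$, \rom{2}+\rom{5} $\Rightarrow$ $I_F(A)$ closed via the remark after Theorem~\ref{thm: GelMap}, and Stone--Weierstrass for density. Your flag on the complex case is apt but is a subtlety the paper also leaves implicit.
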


There are plenty of topological fields satisfying the Stone-Weierstrass theorem. For instance, any topological field whose topology is induced by an absolute value or a Krull valuation satisfies this theorem (\cite{MR234939}). Unfortunately, there is no known characterization of such fields. 

In particular, both $\mathbb{R}$ and $\mathbb{C}$ satisfy the Stone-Weierstrass theorem. In the complex case, however, a modified version is required to account for complex conjugation. Thus, Theorem~\ref{thm : SWGD} gives us a purely algebraic proof of the Gelfand-Naimark theorem both real and complex. 

As we saw in Section~\ref{sec : 4}, if the field $F$ is disconnected and complete the Stone-Weierstrass theorem can be removed from the proof. Although this covers a vast family of fields, the arguments cannot be extended to other cases, as the result crucially relies on disconnectedness.

Moreover, if the field $F$ is complete, then completeness must be included as a necessary condition (Theorem~\ref{cor : Ccomp}). By the Remark after Theorem~\ref{thm: GelMap}, if a Gelfand algebra $A$ is complete, an assumption that only makes sense when the field itself is complete, then the image of $I_F$ is closed. Therefore, it suffices to show that it is also dense, which would complete the argument.

Ultimately, at least in the case of complete fields, the problem of establishing a duality reduces to determining necessary and sufficient conditions for the Gelfand map to have a dense image.

If the completeness assumption on the field is removed, then requiring algebras to be complete no longer makes sense. In fact, $C(X,F)$  is never complete when the field itself is incomplete. Consequently, the problem of characterizing the surjectivity of the Gelfand map becomes significantly more challenging. 

\bibliography{main.bib}
\bibliographystyle{abbrv}
\nocite{*}

\end{document}